\def\@citecolor{blue}
\def\@urlcolor{blue}
\def\@linkcolor{blue}
 \newfont{\Times}{ptmr scaled 1500}
\def\theequation{\thesection.\@arabic \c@equation}
\def\@citecolor{blue}
\def\@urlcolor{blue}
\def\@linkcolor{blue}
\def\theenumi{\@roman\c@enumi}
\theoremstyle{plain}
\newtheorem{theorem}[equation]{Theorem}
\newtheorem{lemma}[equation]{Lemma}
\newtheorem{corollary}[equation]{Corollary}
\newtheorem{proposition}[equation]{Proposition}
\theoremstyle{definition}
\newtheorem{remark}[equation]{Remark}
\newtheorem{remarks}[equation]{Remarks}
\newtheorem{example}[equation]{Example}
\newtheorem{definition}[equation]{Definition}
\newtheorem{construction}[equation]{Construction}
\newcommand{\I}{\mbox{$\  \Longrightarrow \ $}}
\newcommand{\II}{\mbox{$\  \Longleftrightarrow \ $}}
\newcommand{\e}{\mbox{\sf  {$\ell$}}}
\newcommand{{\natsmall}}{\mbox{${\rm\scriptstyle I\hspace{-.6 mm}N}$}}
\newcommand{\enu} {\begin{enumerate}} 
\newcommand{\enua} {\begin{enumerate}[ $(a)$]} 
\newcommand{\enui} {\begin{enumerate}[ $i$.] } 
\newcommand{\denu} {\end{enumerate}}
\def\NZQ{\mathbb}               
\def\NN{{\NZQ N}}
\def\ZZ{{\NZQ Z}}
\def\frk{\mathfrak}               
\def\mm{{\frk m}}
\def\opn#1#2{\def#1{\operatorname{#2}}} 
\opn\depth{depth}
\opn\PF{PF}
\opn\codim{codim}
\newcommand{\RR}{R(I)_{a,b}}
\newcommand{\om}{\omega_R}
\newcommand{\du}{\! \Join^b \!}
\noindent\textsc{Anna Oneto, - DIMA - University of Genova - Via Dodecaneso 35 - 16146 Genova - Italy} \\
\noindent\textsc{Francesco Strazzanti - Department of Mathematics - University of Pisa - Largo Bruno Pontecorvo 5 - 56127 Pisa - Italy} \\
\noindent\textsc{Grazia Tamone - DIMA - University of Genova - Via Dodecaneso 35 - 16146 Genova - Italy} \\
\begin{document}
\title{One-dimensional Gorenstein local rings with decreasing Hilbert function}

\author{Anna Oneto, Francesco Strazzanti, and Grazia Tamone}

\begin{abstract}
\noindent In this paper we  solve a problem posed by M.E. Rossi:  {\it  Is the Hilbert function of a Gorenstein local ring of dimension one not decreasing? }  More precisely, for any integer $h>1$, $h \notin\{14+22k, \, 35+46k \ | \ k\in\NN\}$, we construct infinitely many one-dimensional Gorenstein local rings, included integral domains, reduced and non-reduced rings, whose Hilbert function decreases at level $h$; moreover we prove that there are no bounds to the decrease of the Hilbert function. The key tools are numerical semigroup theory, especially some necessary conditions to obtain decreasing Hilbert functions found by the first and the third author, and a construction developed by V. Barucci, M. D'Anna  and the second author, that gives  a family of quotients of the Rees algebra. Many examples are included.

\end{abstract}
 \subjclass[2010]{  Primary 13H10; 13A30;  Secondary 20M14. }
 \keywords{Hilbert function, Gorenstein ring, Almost Gorenstein ring, Numerical semigroup, Numerical duplication, Almost symmetric semigroup.}
\date{\today}
\maketitle

\section*{\bf Introduction}

 Given a one-dimensional Cohen-Macaulay local ring $(R,\mm,k)$, let $G$ be its associated graded ring $G=\oplus_{h\geq 0}\big( \mm^h/\mm^{h+1}\big)$ and $H_R$ be the Hilbert function of $R$, defined as $H_R(h)=H_G(h)=\dim_k\big( \mm^h
/\mm^{h+1}\big)$. 
The Cohen-Macaulayness of $G$ and the behaviour of the Hilbert function are  classic topics in local algebra.   Starting from 1970s with the basic results of J.D. Sally \cite{js}, \cite{js1}, \cite{js2}, many authors have contributed to these themes; for instance we recall J. Elias \cite{el}, M.E. Rossi and G. Valla \cite{rv} and Rossi's survey \cite {r}. 
It is well-known that if $G$ is Cohen-Macaulay, the function $H_R$ is non-decreasing. 
On the other hand, when $\depth(G)=0$, $H_R$ can decrease, i.e. $H_R(h-1)>H_R(h)$ for some $h$; in this case we say that $H_R$ decreases at level $h$ and that $R$ has decreasing Hilbert function. 
  When $R$ is Gorenstein, M.E. Rossi asked in \cite[Problem 4.9]{r} if $H_R$ is always non-decreasing and in the last decade several authors found partial positive answers to this problem, especially in the case of numerical semigroup rings:  \\
$\bullet$ In \cite{AM} F. Arslan and P. Mete, for large families of complete intersection rings and the Gorenstein numerical semigroup rings with embedding dimension 4, under some arithmetical conditions;\\ 
$\bullet$ In \cite{AMS} F. Arslan, P. Mete  and M. \c{S}ahin, for infinitely many families of Gorenstein rings obtained by   introducing  the notion of nice gluing of numerical semigroups;\\
$\bullet$ In \cite{PT} D.P. Patil and the third author, for the rings associated with balanced numerical semigroups with embedding dimension $4$; \\
$\bullet$ In \cite{ASS} F. Arslan, N. Sipahi  and N. \c{S}ahin,  for other 4-generated Gorenstein numerical semigroup rings constructed by  non-nice gluing; \\
$\bullet$ In \cite{JZ} R. Jafari and S. Zarzuela Armengou, for some families of numerical Gorenstein semigroup rings through the concept of extension; \\
$\bullet$ In \cite{AKN} F. Arslan, A. Katsabekis, and M. Nalbandiyan, for other families of Gorenstein 4-generated numerical semigroup rings; \\
$\bullet$ In \cite{OT} the first and the third author,   for numerical semigroup rings such that $\nu\geq e-4$, where $\nu$ and $e$ denote respectively the embedding dimension and the multiplicity of $R$. 
\par
In this paper we show that Rossi's problem  has  negative answer, by constructing, among others, explicit examples of Gorenstein numerical semigroup rings with decreasing Hilbert function. 
They are  particular rings of a family  introduced and studied by V. Barucci, M. D'Anna and the second author  in \cite{BDS} and \cite{BDS2} to provide a unified approach to Nagata's idealization and amalgamated duplication. Given a commutative ring $R$ and an ideal $I$, for any $a,b \in R$ the rings $\RR$ are defined as suitable quotients of the Rees algebra of $I$.
  These have many good properties, in particular, if $R$ is a one-dimensional local ring, so is $\RR$. If $R$ is Cohen-Macaulay, another important fact is that $\RR$ is Gorenstein if and only if $I$ is a canonical ideal of $R$; in this case, when $R$ is almost Gorenstein, we prove that the Hilbert function of $\RR$ depends only on the   Cohen-Macaulay type and the Hilbert function of $R$. 
The crucial result is that if $R$ is an almost Gorenstein ring with  $H_R(h-2)>H_R(h)$ for some $h \geq 3$ and $I$ is a canonical ideal of $R$, then $\RR$ is a one-dimensional Gorenstein local ring with Hilbert function decreasing at level $h$.  

We find such rings through numerical semigroup theory. A numerical semigroup $S$ is a submonoid of the natural numbers that has finite complement in $\NN$; if $S$ is generated by $s_0, \dots, s_{\nu-1}$ and $k$ is a field, then $k[[S]]:=k[[t^{s_0}, \dots, t^{s_{\nu-1}}]]$ is the numerical semigroup ring associated with $S$ and many properties of $k[[S]]$, such as the Hilbert function, are still contained in $S$. In this context  $k[[S]]$ is almost Gorenstein if and only if $S$ is a so-called almost symmetric semigroup.

Hence to achieve our results we   look  for almost symmetric semigroups with decreasing Hilbert function. If $e$ and $\nu$ are the multiplicity and embedding dimension of $S$ (or equivalently of $k[[S]]$) we  first show  that we need $e-v \geq 4$. By using a result of \cite{OT} and a theorem of H. Nari \cite{Nari} we give an explicit construction of a family of almost symmetric semigroups with the required properties. We also show other examples with the above properties. In conclusion we prove that for any integers $m \geq 1$ and $h > 1, h \notin\{ 14+22k, \, 35+46k \ | \ k\in\NN\}$, there exist infinitely many non-isomorphic one-dimensional Gorenstein local rings $R$ such that $H_R(h-1)-H_R(h) > m$; this class always contains numerical semigroup rings, non-reduced rings, and reduced rings that are not integral domains. 

We include several examples in the numerical semigroup case. In fact if $R$ is a numerical semigroup ring and $b=t^m$, with $m$ odd, the ring $R(I)_{0,-b}$ is isomorphic to the numerical semigroup ring associated with the numerical duplication, a construction introduced and studied by M. D'Anna and the second author in \cite{DS}.\\[2mm]
The structure of the paper is the following. In the first section we introduce the family $\RR$ and show how to reduce the problem to find a suitable almost Gorenstein ring, see Corollary \ref{almsym}.
In Section 2 we describe a procedure that gives infinitely many almost Gorenstein semigroup rings satisfying the desired properties, see Construction \ref{asd} and Theorem \ref{l1}.
In Section 3 we prove the main result, see Theorem \ref{corMax}; moreover we give explicit examples of one-dimensional Gorenstein local semigroup rings with decreasing Hilbert function and other interesting examples based on the above constructions; see e.g. Example \ref{es64} with Hilbert function $[1,53,54,54,53,53,56,59,61,63,64 \rightarrow]$, Example \ref{l15} with Hilbert function decreasing at many levels, and Example \ref{mol}  for a ring with smaller multiplicity and embedding dimension. Finally the  appendix contains the technical results  needed to prove Theorem \ref{l1}.

\section{ \bf  Reduction to the almost Gorenstein case}

Let $R$ be a commutative ring with identity and let $I$ be a proper ideal of $R$. The Rees algebra of $I$ is the ring $\mathcal{R}_+:=\oplus_{n \geq 0} I^n t^n \subseteq R[t]$. Consider the ideal $(t^2+at+b)$ of $R[t]$, where $a$ and $b$ are elements of $R$, and let $I^2(t^2+at+b)$ denote its contraction to the Rees algebra.
In \cite{BDS} it is introduced and studied the following family of rings
$$
\RR:=\frac{\mathcal{R}_+}{I^2(t^2+at+b)}.
$$
One aim of this construction was to provide a unified approach to Nagata's idealization (see \cite{AW}) and amalgamated duplication (see \cite{D} and \cite{DF}), which are isomorphic to $R(I)_{0,0}$ and $R(I)_{-1,0}$ respectively. In fact in \cite{BDS} and \cite{BDS2} it is proved that several properties of the family are independent of $a$ and $b$. In particular $R$ is local if and only if $\RR$ is local, $R$ and $\RR$ have the same dimension and $\RR$ is Cohen-Macaulay if and only if $R$ is Cohen-Macaulay and $I$ is a maximal Cohen-Macaulay module; further, in the last case   if $I$ contains a regular element,  $\RR$ is Gorenstein if and only if $I$ is a canonical ideal of $R$.  

We are interested in the Hilbert function of these rings. In \cite[Proposition 2.3]{BDS} it is stated that their Hilbert functions do not depend on $a$ and $b$, but actually in the proof it is shown more and then we restate that proposition in the version   we need:

\begin{proposition} \label{HilbFun}  
If $(R,\mm)$ is a local ring and $I$ is an ideal of $R$, then for any $h \geq 2$ $H_{\RR}(h)= H_R(h) + \ell_R \left(I\mm^{h-1}/I\mm^h\right)$,
where $\ell_R$ denotes the length as $R$-module. In particular it is independent of $a$ and $b$. 
\end{proposition}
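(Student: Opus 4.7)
My plan is to exploit the explicit $R$-module description of $\RR$ that underlies \cite{BDS}: dividing any representative in $\mathcal{R}_+$ by the monic polynomial $t^2+at+b$ (and using a leading-coefficient argument for injectivity) identifies $\RR$ with $R\oplus It$ as an $R$-module, with multiplication
$$
(r_1+x_1t)(r_2+x_2t)=(r_1r_2-bx_1x_2)+(r_1x_2+r_2x_1-ax_1x_2)\,t.
$$
Under this identification the unique maximal ideal of $\RR$ is $\mathfrak{M}=\mm\oplus It$, and the residue field is $R/\mm=k$.

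The core of the proof is the inductive claim
$$
\mathfrak{M}^h=\mm^h\oplus \mm^{h-1}I\cdot t\qquad \text{for every } h\geq 1.
$$
The case $h=1$ is the definition, and the inclusion $\supseteq$ in the inductive step is immediate from the products $(m_1,0)\cdots(m_{h+1},0)$ and $(m_1,0)\cdots(m_h,0)(0,xt)$. For $\subseteq$, I expand a generic product $(m,xt)\cdot(m',x't)$ with $(m,xt)\in\mathfrak{M}$ and $(m',x't)\in\mathfrak{M}^h$ and observe that the only $a,b$-dependent contributions satisfy
$$
bxx'\in I\cdot\mm^{h-1}I\subseteq \mm^{h-1}I^2\subseteq \mm^{h+1},\qquad axx'\in I\cdot\mm^{h-1}I\subseteq \mm^{h-1}I^2\subseteq \mm^{h}I,
$$
so they are absorbed into the claimed summands thanks to $I\subseteq\mm$.

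Once this formula is in hand, the conclusion is immediate: the quotient
$$
\mathfrak{M}^h/\mathfrak{M}^{h+1}\;\cong\; \mm^h/\mm^{h+1}\,\oplus\, \mm^{h-1}I/\mm^h I
$$
is annihilated by $\mathfrak{M}$, hence its $R$-length coincides with its $k$-dimension and adds over the two summands. This yields $H_{\RR}(h)=H_R(h)+\ell_R(I\mm^{h-1}/I\mm^h)$, in which $a$ and $b$ do not appear.

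The only genuine obstacle is the $\subseteq$ direction of the inductive step: one needs the extra factor of $\mm$ coming from $I\subseteq\mm$ to push the $a,b$-terms down into the next summand. This same mechanism is structurally the reason why the Hilbert function cannot see $a$ or $b$.
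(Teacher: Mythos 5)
Your proof is correct. The paper itself gives no argument for this proposition — it simply restates \cite[Proposition 2.3]{BDS} — and your computation (the $R$-module identification $\RR \cong R \oplus It$ followed by the induction giving $\mathfrak{M}^h = \mm^h \oplus \mm^{h-1}I\,t$, whence $\mathfrak{M}^h/\mathfrak{M}^{h+1} \cong \mm^h/\mm^{h+1} \oplus I\mm^{h-1}/I\mm^h$) is precisely the computation underlying the cited proof, so it matches the intended approach.
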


We are interested in the case in which $R$ is a one-dimensional Cohen-Macaulay local ring and $I$ is a canonical ideal of $R$. In this case    we can easily compute $\ell_R (I\mm^{h-1}/I\mm^h)$, under an extra hypothesis on $R$: almost Gorensteinness. Following \cite{BF} and \cite{GMP} we recall the definition in the one-dimensional case:

\begin{definition}
Let $(R,\mm)$ be a one-dimensional Cohen-Macaulay local ring with a canonical module $\om$ such that $R \subseteq \om \subseteq \overline{R}$. Then $R$ is said to be {\em almost Gorenstein} if $\mm \om = \mm$.
\end{definition}

From now on we assume that $R$ is one-dimensional. 
In the setting of the previous definition, chosen a regular element $a \in R$ such that $a\om \subset R$, the ideal $I=a\om$ is a canonical ideal of $R$ and all canonical ideals of $R$ can be obtained in this way (see e.g. \cite[Corollary 2.8]{GMP}).\\  
If $R$ is an almost Gorenstein ring and $I$ is a canonical ideal of $R$, for any $h \geq 2$ we have 
\begin{equation} \label{lunghezza}
\ell_R \left(\frac{I\mm^{h-1}}{I\mm^h}\right)=\ell_R \left(\frac{a\om \mm^{h-1}}{a \om \mm^h}\right)=\ell_R \left(\frac{a \mm^{h-1}}{a \mm^h}\right)=\ell_R \left(\frac{\mm^{h-1}}{\mm^h}\right)=H_R (h-1).
\end{equation}

\noindent
Therefore we get the following:

\begin{proposition} \label{Formula}
Let $R$ be an almost Gorenstein ring and let $I$ be a canonical ideal of $R$. Let $\nu(R)$ and $t(R)$ denote the embedding dimension and the Cohen-Macaulay type of $R$ respectively. Then the Hilbert function of $\RR$ is:
\begin{equation*}
\begin{split}
H_{\RR}(0)&= 1 \\
H_{\RR}(1)&= \nu(R) + t(R) \\
H_{\RR}(h)&= H_R(h) + H_R(h-1) \ \ \ \ \ {\text if } \ h \geq 2.
\end{split}
\end{equation*}\end{proposition}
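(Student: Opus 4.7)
The plan is to handle the three ranges $h=0$, $h=1$, and $h\geq 2$ separately, since Proposition~\ref{HilbFun} only applies in the last range. The case $h=0$ is immediate: $\RR$ is local with residue field $k=R/\mm$, so $H_{\RR}(0)=1$.

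For $h=1$ I would exploit the $R$-module decomposition $\RR=R\oplus It$. First, since $R\hookrightarrow \RR$ is a finite extension (as $t$ satisfies $t^2+at+b=0$), every maximal ideal of $\RR$ contracts to $\mm$, and a direct inspection of $\RR/\mm\RR$ shows there is a unique such maximal ideal, namely $M=\mm\oplus It$. Next I would compute $M^2$: the contributions from $\mm\cdot\mm$ and $\mm\cdot It$ give $\mm^2$ and $\mm It$ respectively, while the only delicate term $(It)(It)=I^2t^2$ reduces, via the defining relation, to $-aI^2t-bI^2$. The key observation is that $I\subseteq\mm$ forces $I^2\subseteq I\mm$, so both summands lie in $\mm It+\mm^2$ regardless of the values of $a,b$. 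Hence $M^2=\mm^2\oplus\mm It$, which yields
\[
M/M^2\;\cong\;\mm/\mm^2\,\oplus\, I/\mm I,\qquad H_{\RR}(1)=\nu(R)+\mu_R(I).
\]
Writing $I=a\om$ for a regular element $a\in R$ gives $\mu_R(I)=\mu_R(\om)=t(R)$, so $H_{\RR}(1)=\nu(R)+t(R)$.

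For $h\geq 2$ the formula is essentially a one-line substitution: Proposition~\ref{HilbFun} gives $H_{\RR}(h)=H_R(h)+\ell_R(I\mm^{h-1}/I\mm^h)$, and equation~\eqref{lunghezza}---which is precisely where the almost Gorenstein hypothesis and the canonicity of $I$ are used---identifies the length with $H_R(h-1)$.

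The only mildly delicate point I foresee is the $h=1$ case: one must carefully verify that the quadratic relation $t^2+at+b=0$ does not produce contributions to $M^2$ beyond $\mm^2\oplus\mm It$. This reduces to the elementary but essential containment $I^2\subseteq\mm I$, which makes both $aI^2t$ and $bI^2$ automatically absorbed. Note that the almost Gorenstein hypothesis plays no role in the $h=0$ and $h=1$ formulas; it enters only through equation~\eqref{lunghezza} for $h\geq 2$.
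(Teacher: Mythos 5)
Your argument is correct, and for $h=0$ and $h\geq 2$ it coincides with the paper's: the case $h\geq 2$ is exactly Proposition \ref{HilbFun} combined with equation (\ref{lunghezza}), which is where almost Gorensteinness and $I=a\om$ enter, just as you say. The only genuine divergence is at $h=1$. The paper disposes of this case by appealing to the length formula behind Proposition \ref{HilbFun} (i.e.\ the computation of the powers of the maximal ideal of $\RR$ carried out in \cite{BDS}) together with $\mu_R(I)=\mu_R(\om)=t(R)$, noting that almost Gorensteinness is not needed there; since the restated Proposition \ref{HilbFun} is only formulated for $h\geq 2$, this step is left as ``easy to deduce.'' You instead make it self-contained: starting from the $R$-module decomposition $\RR=R\oplus It$ you identify the maximal ideal $M=\mm\oplus It$ and verify $M^2=\mm^2\oplus \mm It$, the key point being that the relation $t^2=-at-b$ sends $I^2t^2$ into $\mm It+\mm^2$ because $I\subseteq\mm$ forces $I^2\subseteq \mm I\subseteq\mm^2$; this gives $M/M^2\cong \mm/\mm^2\oplus I/\mm I$ and hence $H_{\RR}(1)=\nu(R)+\mu_R(I)=\nu(R)+t(R)$, the last equality being the same use of the fact that a canonical module has $t(R)$ generators as in the paper. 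What your route buys is an explicit degree-one computation independent of the exact range of validity of the cited length formula (and a transparent confirmation that $a$ and $b$ play no role); what the paper's route buys is brevity, by reusing the structural results of \cite{BDS} already invoked for $h\geq 2$. One small caveat: the decomposition $\RR=R\oplus It$ and its multiplication rule are themselves facts from \cite{BDS}, so your proof is not entirely free of that reference either.
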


\begin{proof}
 By Proposition \ref{HilbFun} and (\ref{lunghezza}) we only need to show the statement about $H_{\RR}(1)$: actually this equality is true even if $R$ is not almost Gorenstein . In fact, since the minimal number of generators of a canonical module is the   Cohen-Macaulay  type of the ring (see e.g. \cite[Proposition 3.3.11]{BH}), it is easy to deduce the thesis from Proposition \ref{HilbFun}. 
\end{proof}

\begin{corollary} \label{almsym}
Let $R$ be an almost Gorenstein ring and let $I$ be a canonical ideal of $R$. Then $\RR$ is a Gorenstein ring for any choice of $a,b \in R$. Further $H_{\RR}(1)\!-\!H_{\RR}(2)=t(R)\!-\!H_R(2)$ \, and \,\, $H_{\RR}(h\!-\!1)\!-\!H_{\RR}(h)=H_R(h\!-\!2)\!-\!H_R(h)$ for any $h\geq 3$.
\end{corollary}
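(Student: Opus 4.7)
The plan is to assemble the corollary from results already in the excerpt, so the proof will be short and essentially computational.

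First I would address the Gorenstein claim. Recall that the excerpt has already noted that, if $I$ contains a regular element, then $\RR$ is Gorenstein precisely when $I$ is a canonical ideal of $R$. Since $R$ is almost Gorenstein, $R$ is one-dimensional Cohen--Macaulay, and the canonical ideal $I = a\omega_R$ (with $a$ regular) contains a regular element; hence $\RR$ is Gorenstein for every choice of $a,b \in R$.

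Next I would derive the two difference formulas from Proposition \ref{Formula}. For the case $h=2$, I would substitute:
\begin{equation*}
H_{\RR}(1) - H_{\RR}(2) = \bigl(\nu(R) + t(R)\bigr) - \bigl(H_R(2) + H_R(1)\bigr) = t(R) - H_R(2),
\end{equation*}
using $H_R(1) = \nu(R)$. For $h \geq 3$, both $H_{\RR}(h-1)$ and $H_{\RR}(h)$ fall into the range where Proposition \ref{Formula} applies, so
\begin{equation*}
H_{\RR}(h-1) - H_{\RR}(h) = \bigl(H_R(h-1) + H_R(h-2)\bigr) - \bigl(H_R(h) + H_R(h-1)\bigr) = H_R(h-2) - H_R(h),
\end{equation*}
and the telescoping of the middle term is the whole content of the calculation.

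There is essentially no obstacle here; the nontrivial work was done in Proposition \ref{HilbFun} (which identifies $H_{\RR}(h)$ with $H_R(h) + \ell_R(I\mathfrak{m}^{h-1}/I\mathfrak{m}^h)$) and in the length identity \eqref{lunghezza} (which uses almost Gorensteinness through $\mathfrak{m}\omega = \mathfrak{m}$ to replace $a\omega\mathfrak{m}^{h-1}/a\omega\mathfrak{m}^h$ by $\mathfrak{m}^{h-1}/\mathfrak{m}^h$). Once those are in place, the corollary is a direct algebraic manipulation of the formula for $H_{\RR}$.
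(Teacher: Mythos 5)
Your proof is correct and follows essentially the same route as the paper: the Gorenstein statement is the quoted criterion that $\RR$ is Gorenstein exactly when $I$ is a canonical ideal (the paper cites \cite[Corollary 3.3]{BDS} for this), and both difference formulas are obtained by substituting Proposition \ref{Formula} and using $H_R(1)=\nu(R)$, just as in the paper's own argument.
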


 \begin{proof}
Since $I$ is a canonical ideal, $\RR$ is a Gorenstein ring by \cite[Corollary 3.3]{BDS}. Moreover, by the previous proposition, for any $h \geq 3$ we have
$$
H_{\RR}(h-1)-H_{\RR}(h)= H_R(h-1) + H_R(h-2) - H_R(h) - H_R(h-1)= H_R(h-2) - H_R(h).
$$
The first formula can be found in the same way, since $H_R(1)=\nu(R)$.
\end{proof}

 \begin{remark} {\rm
In this paper we are interested in {\it negative results}, anyway it is clear that Proposition \ref{HilbFun} can be also used to get {\it positive results}. A local ring $S$ is said to have minimal multiplicity if its multiplicity is $1 + \codim S$. In \cite[Theorem 1.1]{P} it is proved that if $S$ is a two-dimensional Cohen-Macaulay local ring with minimal multiplicity and $(R,\mm)$ is a one-dimensional Cohen-Macaulay local ring which is a quotient of $S$, then every maximal Cohen-Macaulay $R$-module $M$ has non-decreasing Hilbert function. Here the $h$-th value of the Hilbert function of $M$ is defined as the length of $M\mm^h  / M\mm^{h+1} $. Therefore, Proposition \ref{HilbFun} implies that, under the above hypothesis, $\RR$ has non-decreasing Hilbert function for any maximal Cohen-Macaulay ideal $I$; in particular, if $I$ is a canonical ideal, $\RR$ is a one-dimensional Gorenstein local ring with non-decreasing Hilbert function for all $a,b \in R$. See \cite{P} for several explicit cases in which the above hypothesis hold. } 
\end{remark} 

\section{\bf  Construction of almost symmetric semigroups}
In order  to obtain Gorenstein  local rings with decreasing Hilbert function, by Corollary \ref{almsym} it is enough to find   almost Gorenstein rings $R$ such that $H_R(h-2)>H_R(h)$; in this   section we construct  infinitely many semigroup rings  verifying these  conditions. \\
First, we briefly recall some definitions and properties about numerical semigroup theory that we need.
A {\em numerical semigroup} $S$ is a submonoid of the natural numbers such that $|\NN \setminus S| < \infty$. The maximum element of $\NN \setminus S$ is called {\em Frobenius number} of $S$ and it will be denoted by $f(S)$. 
If $S$ is generated by $n_0  \leq n_1 \leq \dots \leq n_{\nu-1}$, we write $S= \langle n_0, \dots, n_{\nu-1} \rangle$. It is well-known that a numerical semigroup has an unique minimal system of generators and its cardinality is the {\em embedding dimension} $\nu$  of $S$. The smallest non-zero element of $S$ is $n_0 $; it is called {\em multiplicity} of $S$ and we will denote it by $e(S)$ or simply $e$, if the semigroup is clear from the context.
A {\em numerical semigroup ring} is a local ring of the form $k[[S]]:=k[[t^{n_0},t^{n_1}, \dots,t^{n_{\nu-1}}]]$, where $S= \langle n_0,n_1 \dots, n_{\nu-1} \rangle$ is a numerical semigroup and $k$ a field.
A {\em relative ideal} $E$ of a numerical semigroup is a subset of $\mathbb{Z}$ such that there exists $x\in \NN$ for which $x+E \subseteq S$ and $E+S \subseteq E$; if $E$ is contained in $S$ we say that $E$ is a (proper) ideal of $S$. 
An example of ideal is the {\em maximal ideal} $M=M(S):=S \setminus \{0\}=v(\mm)$, where $\mm=(t^{n_0},t^{n_1}, \dots,t^{n_{\nu-1}})$ and ${ v} :k((t))\longrightarrow {\mathbb Z}\cup\{\infty\}$ is the usual valuation. An example of a relative ideal is the {\em standard canonical ideal} $K(S):=\{x \in \NN \ | \ f(S)-x \notin S \}$; more generally, we call {\em canonical ideals} all the relative ideals $K(S)+z$ for any $z \in \ZZ$. \\ The properties of a semigroup ring  are strictly related to those of the associated numerical semigroup. In particular: 
  $H_{K[[S]]}(h)=|v(\mm^h) \setminus v(\mm^{h+1})|=|hM  \setminus (h+1)M |$ for each $h\geq1$. {
  We shall denote this function  by $H_S$ and its values by$\, [1,\nu(S),H_S(2),\dots,e,\rightarrow ]$.} It is well-known that:\\
$\bullet$ $k[[S]]$ is Gorenstein if and only if $ K(S)=S$; in this case $S$ is said to be {\em symmetric}; \\
$\bullet$ $k[[S]]$ is almost Gorenstein if and only if $M+K(S)=M$; in this case $S$ is said to be {\em almost symmetric}.

\begin{definition} 
Let $S$ be a numerical semigroup. 
\enui
	\item If $s$ is an element of $S$, the {\it order} of $s$ is  $ord(s):=\max \{i \ | \ s\in iM \}$.
	\item The {\em Ap\'ery set} of $S$ is $ Ap(S) := \{s \in S \ | \ s-e \notin S \}$, shortly $Ap$; it has cardinality $e$.
	\item $Ap_k:=\{ s\in  Ap  \ | \ ord(s)=k \}$.
	\item $D_h:= \{s \in S \ | \ ord(s)=h-1 \ {\rm and} \ ord(s+e)>h \}$, $D_h^t:=\{s \in D_h \ | \ ord(s+e)=t \}$.
	\item $C_k:=\{s \in S \ | \ ord(s)=k \ {\rm and} \ s-e \notin (k-1)M \}$. 
\end{enumerate}
\end{definition}
\noindent We notice that $C_k=  Ap_k \, \bigcup \, \{\cup_h \,\, (D_h^k + e) \ | \ 2 \leq h \leq k-1 \}$ and $H_S(k-1)-H_S(k)=|D_k|-|C_k|$, see for instance \cite{CJZ} and \cite{PT}.\\[2mm]
The following theorem of H. Nari characterizes the almost symmetric numerical semigroups by means of their Ap\'ery sets. First we recall that a {\em pseudo-Frobenius number} of $S$ is an integer $x \in \mathbb{Z} \setminus S$ such that $x + s \in S$ for any $s \in M $. We denote the set of pseudo-Frobenius number of $S$ by $\PF(S)$; it is straightforward to see that $f(S) \in \PF(S)$. The cardinality of $\PF(S)$ is the {\em type} of $S$ and it will be denoted by $t(S)$;   it is well-known that $t(S)=t(k[[S]])$.  

\begin{theorem} {\rm\cite[Theorem 2.4]{Nari}}\label{nari}
Let $S$ be a numerical semigroup. Set $ Ap =A\cup B$, where $A:=\{0 <  \alpha_1 < \dots < \alpha_m\}, \, B:= \{\beta_1 < \dots  < \beta_{t(S)-1} \}$, with $m=e \!-\!t(S)$, and $\PF(S)=\{\beta_i \!-\!e   \ | \ 1 \leq i \leq t(S)\!-\!1 \}\cup \{\alpha_m \!-\! e =f(S)\}$. 
The following conditions are equivalent:
\begin{enumerate}
	\item[\it i.] $S$ is almost symmetric; 
	\item[\it ii.] $\alpha_i + \alpha_{m-i}= \alpha_m$ for all $i \in \{1,2, \dots, m-1 \}$ and $\beta_j \!+\! \beta_{t(S)-j}=\alpha_m \!+\! e $ for all $j \in \{1,2, \dots, t(S)-1 \}$. 
\end{enumerate}
\end{theorem}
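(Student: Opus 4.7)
The plan is to exploit the involution $\alpha \mapsto \alpha_m - \alpha$ on $Ap(S)$, which will behave differently on the two pieces $A$ and $B$ of the Apéry set and interact cleanly with the canonical ideal $K(S) = \{x \in \NN : f - x \notin S\}$. As a preliminary I will record the standard reformulation of almost symmetry: the condition $M + K(S) \subseteq M$ is equivalent to $K(S) \setminus S \subseteq \PF(S)$, which in turn says that every gap $x \in \NN \setminus S$ with $f - x \notin S$ must itself be a pseudo-Frobenius number.

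For (i) $\Rightarrow$ (ii), I first handle the $A$-condition. Given $\alpha \in A$ with $\alpha \notin \{0, \alpha_m\}$, the fact that $\alpha \in Ap(S) \setminus \{0\}$ forces $\alpha > e$ and $\alpha - e \in \NN \setminus S$, while $\alpha \notin B \cup \{\alpha_m\}$ gives $\alpha - e \notin \PF(S)$. Almost symmetry then yields $f - (\alpha - e) = \alpha_m - \alpha \in S$. Moreover, $\alpha_m - \alpha - e = f - \alpha \notin S$ (otherwise $f = \alpha + (f - \alpha) \in M + S \subseteq S$), so $\alpha_m - \alpha \in Ap(S)$; and $\alpha_m - \alpha \notin B$, because $f - \alpha \in \PF(S)$ would force $f \in S$ by the same addition. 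Hence $\alpha \mapsto \alpha_m - \alpha$ is an order-reversing involution on $A = \{0 < \alpha_1 < \ldots < \alpha_m\}$, giving $\alpha_i + \alpha_{m-i} = \alpha_m$. The $B$-condition is analogous: for $\beta_j \in B$ one sets $f_j = \beta_j - e \in \PF(S) \setminus \{f\}$, observes that $f - f_j$ is a positive gap (it cannot be in $S$, else $f \in S$), and concludes from almost symmetry that $f - f_j \in \PF(S) \setminus \{f\}$. Translating via $\beta \mapsto \alpha_m + e - \beta$, which is an order-reversing involution on $B$, yields $\beta_j + \beta_{t-j} = \alpha_m + e$.

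For (ii) $\Rightarrow$ (i), I will use an Apéry-type description of $K(S)$ which holds unconditionally. A short direct check, using only $\alpha \in S$ and $\alpha - e \notin S$, shows that $\alpha_m - \alpha$ belongs to $Ap(K(S)) := \{y \in K(S) : y - e \notin K(S)\}$ for every $\alpha \in Ap(S)$; since both sets have $e$ elements and the map $\alpha \mapsto \alpha_m - \alpha$ is injective, one gets $Ap(K(S)) = \alpha_m - Ap(S)$, and hence $K(S) = \{\alpha_m - \alpha + ne : \alpha \in Ap(S),\, n \in \NN\}$. Under hypothesis (ii), the verification of $M + K(S) \subseteq M$ splits into two clean cases: if $\alpha = \alpha_i \in A$, the $A$-condition gives $\alpha_m - \alpha_i = \alpha_{m-i} \in A \subseteq S$, so $s + (\alpha_m - \alpha_i) \in S + S \subseteq S$ for all $s \in M$; if $\alpha = \beta_j \in B$, the $B$-condition gives $\alpha_m - \beta_j = \beta_{t-j} - e = f_{t-j} \in \PF(S)$, and the defining property of pseudo-Frobenius numbers yields $s + f_{t-j} \in S$ for every $s \in M$. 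Adding $ne \in S$ in either case, we conclude $M + K(S) \subseteq M$.

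The main obstacle, and the step I expect to need the most care, is keeping the two involutions separate: in the first direction, ruling out that $\alpha_m - \alpha$ lands in $B$ when $\alpha \in A$, and in the second direction identifying $\alpha_m - \beta_j$ as the pseudo-Frobenius number $f_{t-j}$ rather than as an element of $S$. Both points rest on the same elementary fact — that $\alpha + (f - \alpha) = f$ together with either $\alpha \in M$ or the pseudo-Frobenius property of $f - \alpha$ would force $f$ back into $S$ — and this single observation is the unifying thread of the argument.
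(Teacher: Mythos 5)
Your proof is correct. Note that the paper itself does not prove this statement: it is quoted verbatim from Nari's paper \cite[Theorem 2.4]{Nari}, so there is no internal proof to compare against. Your argument is essentially the standard one behind Nari's result: you use the reformulation of almost symmetry as $K(S)\setminus S\subseteq \PF(S)$, show in one direction that $\alpha\mapsto \alpha_m-\alpha$ and $\beta\mapsto \alpha_m+e-\beta$ are order-reversing involutions on $A\setminus\{0,\alpha_m\}$ and on $B$ respectively, and in the other direction use the (correct, unconditional) identity $Ap(K(S))=\alpha_m-Ap(S)$ to verify $M+K(S)\subseteq M$ case by case; all the key steps (ruling out $\alpha_m-\alpha\in B$, identifying $\alpha_m-\beta_j$ as a pseudo-Frobenius number, and the trivial edge cases $\alpha\in\{0,\alpha_m\}$) check out.
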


Since we are looking for a semigroup with decreasing Hilbert function, we need that $| Ap_2 | \geq 3$, by \cite[Corollary 3.11]{DDM};\, then we focus on the simpler case, $| Ap_2 |=3$.

\begin{proposition}\label{ap23}
Assume $|Ap_2|=3, \ Ap_k=\emptyset$ for all $k\geq 3$ and $H_S$ decreasing. Then $S$ cannot be almost symmetric.

\end{proposition}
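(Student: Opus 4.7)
My plan is to assume for contradiction that $S$ is almost symmetric and deduce that $H_S$ must then be non-decreasing, contradicting the hypothesis. I would begin by applying Nari's Theorem \ref{nari} to write $Ap = A \sqcup B$, with $A = \{0 = \alpha_0 < \alpha_1 < \cdots < \alpha_m\}$ satisfying $\alpha_i + \alpha_{m-i} = \alpha_m$, and $B = \{\beta_1 < \cdots < \beta_{t-1}\}$ satisfying $\beta_j + \beta_{t-j} = \alpha_m + e$. The elementary count $|Ap| = |Ap_0| + |Ap_1| + |Ap_2| = 1 + (\nu - 1) + 3$ forces $\nu = e - 3$.

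Next I would pin down the orders of the elements of $A$ and $B$. Since every Apéry element has order at most $2$ and $ord$ is super-additive on $S$, the relation $\alpha_i + \alpha_{m-i} = \alpha_m$ gives $ord(\alpha_i) + ord(\alpha_{m-i}) \leq 2$. Because every nonzero element of $Ap$ has order $\geq 1$, this forces either $m = 1$ or $ord(\alpha_m) = 2$ with all interior $\alpha_i$ of order exactly $1$. In the main subcase we obtain $|A \cap Ap_1| = m-1$, $|A \cap Ap_2| = 1$, and hence $|B \cap Ap_1| = \nu - m$, $|B \cap Ap_2| = 2$. A parallel analysis in $B$ using $\beta_j + \beta_{t-j} = \alpha_m + e$, an element of order $\geq 3$, constrains the orders of the $\beta_j$.

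Finally I would invoke the formula $H_S(k-1) - H_S(k) = |D_k| - |C_k|$ from \cite{CJZ, PT} and attempt to establish $|D_k| \leq |C_k|$ at every level $k \geq 2$. At $k=2$, $|C_2| = |Ap_2| = 3$; for each $n \in D_2 \subseteq Ap_1$ the jump condition yields a decomposition $n + e = n_{j_1} + n_{j_2} + n_{j_3}$ with summands in $Ap_1$ (since $e$ cannot appear), and the almost symmetric structure allows one to attach to $n$ a distinct element of $Ap_2$ essentially via the order-$2$ pair sum $n_{j_2} + n_{j_3}$, producing an injection $D_2 \hookrightarrow Ap_2$. For $k \geq 3$, $Ap_k = \emptyset$ gives $C_k = \bigcup_{h=2}^{k-1}(D_h^k + e)$, and the Nari pairing together with the result of \cite{OT} (which applies here since $\nu \geq e - 4$) is meant to transport the $Ap$-level symmetry into an injection $D_k \hookrightarrow C_k$. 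The main obstacle is precisely this last level-by-level matching: the Nari involution acts purely inside $Ap$, whereas $D_k$ and $C_k$ consist of elements of $S$ lying outside $Ap$, so one must carefully track how $ord(s)$ and $ord(s+e)$ change under the symmetry, and this is where the machinery from \cite{OT} and the technical appendix of the paper are needed.
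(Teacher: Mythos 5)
Your opening moves (Nari's decomposition, the count forcing $\nu=e-3$, and the observation that super-additivity of $ord$ pins the interior $\alpha_i$ to order $1$ and $\alpha_m$ to order $2$ when $m\geq 2$) are sound and consistent with what the paper does. But the decisive part of your argument is missing: you never actually construct the injections $D_2 \hookrightarrow Ap_2$ and $D_k \hookrightarrow C_k$ that your strategy requires, and you flag exactly this as the ``main obstacle.'' At level $2$, the element $n_{j_2}+n_{j_3}$ extracted from a maximal representation of $n+e$ is indeed of order $2$, but there is no reason it lies in $Ap_2$ (its difference with $e$ may well be in $S$), and no argument that the assignment $n\mapsto n_{j_2}+n_{j_3}$ is injective; for $k\geq 3$ you offer only an appeal to unspecified ``machinery.'' Note also that the appendix of the paper is devoted to Construction \ref{asd} and Theorem \ref{l1}, not to this proposition, so it cannot supply the transport of the Nari pairing to the sets $D_k, C_k$ that you need. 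As it stands the proposal is a plan whose core step is absent, and it aims at a statement (non-decrease of $H_S$ at every level from almost symmetry alone) that is stronger, and likely harder, than what is needed.

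For comparison, the paper's proof avoids any level-by-level matching. Since $H_S$ is assumed to decrease, \cite[Theorem 4.2.3]{OT} gives $n_1\neq n_2$ in $Ap_1$ with $Ap_2=\{2n_1,\,n_1+n_2,\,2n_2\}$, say $n_1<n_2$. Because $2n_1-e\notin S$, the element $n_1-e$ is not a pseudo-Frobenius number, so in Nari's decomposition $n_1$ lies in $A$; hence $m\geq 2$, $ord(\alpha_m)>1$, and since $\alpha_m$ is the largest Apéry element this forces $\alpha_m=2n_2$. But \cite[Proposition 4.3.1]{OT} yields $3n_2-e\in Ap$ with $3n_2-e>2n_2$, a contradiction. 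Your proposal uses neither of these two inputs (in particular it never exploits the specific shape of $Ap_2$ coming from the decreasing hypothesis), which is precisely why it stalls at the matching step; to repair it you would either have to supply those injections in detail or switch to an argument of the paper's type that derives a direct numerical contradiction inside the Apéry set.
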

\begin{proof} The assumptions on the Ap\'ery set imply that $\nu =e-3$. Since $H_S$ decreases, by \cite[Theorem 4.2.3]{OT} there exist $n_1\neq n_2\in Ap_1$ such that  $Ap_2=\{2n_1, n_1+n_2,2n_2 \}$ and we assume $n_1<n_2$. The element $n_1-e$ is not a pseudo-Frobenius number, because $2n_1-e \notin S$; therefore if $S$ is almost symmetric, with the notation of the previous theorem, $A$ is non-empty. It follows that $ord(\alpha_m)>1$ and so $\alpha_m=2n_2$. On the other hand by \cite[Proposition 4.3.1]{OT} we have $3n_2-e\in Ap$, that is a contradiction because $3n_2-e>2n_2.$
\end{proof}

According to the above proposition, we consider the next case $|Ap_3|=1$. In this context the following proposition holds:

\begin{proposition}\label{to} {\rm\cite[Proposition 3.4]{OT}}
Assume   $| Ap_2 |=3,\, | Ap_3 |=1$   and $H_S$   decreasing. Let $\ell = \min \{h \ | \ H_S \ {\rm decreases \ at \ level} \ h \}$ and let $d  = \max \{ord(\sigma) \ | \ \sigma \in Ap  \}$. Then $\ell \leq d \, $ and there exist $n_1, n_2 \in  Ap_1$ such that for $2\leq h\leq \e$ 
$$C_h=\{hn_1,(h-1)n_1+n_2,\dots,n_1+(h-1)n_2,hn_2\}=Ap_h\cup(D_{h-1}+e)$$
$$D_{\ell }+e=\{(d  +1)n_1, \ell   n_1+n_2,(\ell   -1)n_1+2n_2, \dots,(\ell   +1)n_2 \}.$$

Further, if $(\ell   ,d  )\neq (3,3), $ then $Ap_k={kn_1}$, for \,$3 \leq k \leq d  $. 
\end{proposition}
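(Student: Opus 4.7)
The plan is to bootstrap from the known shape of $Ap_2$ up to orders $h \leq \ell$, keeping $n_1, n_2$ as the only building blocks of every element that appears. First I would invoke [OT, Theorem 4.2.3] exactly as in the proof of Proposition~\ref{ap23} to produce $n_1 < n_2$ in $Ap_1$ with $Ap_2 = \{2n_1,\, n_1 + n_2,\, 2n_2\}$. For the inequality $\ell \leq d$, I would argue that any level $h$ at which $H_S$ decreases must satisfy $D_h \neq \emptyset$, hence there is $s \in S$ with $ord(s) = h-1$ and $ord(s+e) \geq h+1$. Decomposing $s + e = a + ke$ with $a \in Ap$ and using the bound $ord(a) \leq d$ together with the strict growth of orders within a fixed residue class mod $e$, such a jump cannot occur once $h$ exceeds $d$; so the first decrease $\ell$ satisfies $\ell \leq d$.

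Next, by induction on $h$ for $2 \leq h \leq \ell$, I would prove
\[
C_h = \{i\,n_1 + (h-i)\,n_2 \mid 0 \leq i \leq h\} = Ap_h \cup (D_{h-1} + e).
\]
The base case $h = 2$ follows from $D_1 = \emptyset$ and the known form of $Ap_2$. For the inductive step, the hypothesis that $H_S$ does not decrease before level $\ell$ forces $|C_h| - |D_h| \geq 0$, so the $h+1$ elements $i\,n_1 + (h-i)n_2$ (which manifestly have order $h$ and satisfy the Apéry/jump condition via the inductive description of $C_{h-1}$) exhaust $C_h$. The hypothesis $|Ap_3| = 1$ is crucial here: it rules out any Apéry element of order $3$ built from generators other than $n_1, n_2$, and this propagates to higher orders, yielding $Ap_k = \{k\,n_1\}$ for $3 \leq k \leq d$ whenever $(\ell, d) \neq (3, 3)$. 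In the exceptional pair $(\ell, d) = (3, 3)$ the sole element of $Ap_3$ could equally be $3n_1$ or $3n_2$, and the induction machinery cannot separate them.

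Finally, at level $\ell$ itself the strict inequality $|D_\ell| > |C_\ell| = \ell + 1$ together with the previous step forces $D_\ell + e$ to contain the $\ell + 1$ combinations $(\ell + 1 - i)n_1 + i\,n_2$ for $i = 1, \dots, \ell + 1$; the remaining element is pinned down by the maximal Apéry element $d\,n_1 \in Ap_d$, whose shift $d\,n_1 + e$ has order $d + 1 \geq \ell + 1 > \ell$, so $d\,n_1 \in D_\ell$ and $(d+1)n_1 \in D_\ell + e$. The main obstacle is the combinatorial bookkeeping in the inductive step---in particular, verifying that no Apéry element of intermediate order involves a summand different from $n_1$ or $n_2$; this is precisely where the hypothesis $|Ap_3| = 1$ plays its decisive role, and where the exceptional pair $(\ell, d) = (3, 3)$ must be set aside.
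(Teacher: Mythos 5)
The paper itself offers no proof of this statement: it is quoted verbatim from \cite[Proposition 3.4]{OT}, so your argument has to stand on its own, and as written it does not. The decisive gap is in your inductive step. You assert that the $h+1$ elements $in_1+(h-i)n_2$ ``manifestly have order $h$ and satisfy the Ap\'ery/jump condition'', but that is precisely the content of the proposition: nothing in the hypotheses makes it evident that $ord(hn_2)=h$ rather than larger, nor that $in_1+(h-i)n_2-e\notin (h-1)M$. The inequality $|D_h|\le |C_h|$ for $h<\ell$ gives no lower bound on $|C_h|$, so it cannot force $C_h$ to exhaust all $h+1$ combinations; what does come cheaply from the subsum property of maximal representations is only the inclusion $C_h\subseteq\{an_1+bn_2 \mid a+b=h\}$, i.e.\ the upper bound $|C_h|\le h+1$. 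Likewise ``this propagates to higher orders, yielding $Ap_k=\{kn_1\}$'' is an assertion, not an argument: the subsum property constrains which combinations can be Ap\'ery elements, but it does not by itself decide that the unique element of $Ap_3$ is $3n_1$ rather than, say, $n_1+2n_2$ or $3n_2$, and this is exactly where the interplay with $\ell\le d$ and the exceptional pair $(\ell,d)=(3,3)$ must be analysed.

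Two further steps are defective. First, your identification of the extra element of $D_\ell+e$ is wrong as stated: $dn_1\in D_\ell$ would require $ord(dn_1)=\ell-1$, whereas $ord(dn_1)=d\ge\ell$; moreover you conflate $dn_1+e$ with $(d+1)n_1$, which are distinct elements. The element actually responsible is $(d+1)n_1-e$, and one must prove both $ord\bigl((d+1)n_1-e\bigr)=\ell-1$ and $ord\bigl((d+1)n_1\bigr)\ge\ell+1$, neither of which you address; nor do you show that $D_\ell$ contains nothing beyond the listed $\ell+2$ elements. Second, the proof of $\ell\le d$ is only a slogan: the strict growth of $k\mapsto ord(w+ke)$ for $w\in Ap$ does not by itself confine jumps of size at least two to levels $\le d$; one needs a quantitative bound (for instance on the number of copies of $e$ occurring in a maximal expression of $w+ke$) that you neither state nor prove. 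In short, your skeleton follows the natural route, but every point where the hypotheses $|Ap_2|=3$, $|Ap_3|=1$ and the minimality of $\ell$ must do actual work is left as an assertion; to make this complete you would need to reproduce (or explicitly cite) the detailed arguments of \cite{OT}.
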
      

The next proposition shows that, in the setting of the previous one, we only need to find an almost symmetric semigroup with decreasing Hilbert function. This is not true in general, for instance the numerical semigroup 
$S=\langle 30, 35, 42, 47, 108, 110, 113, 118, 122, 127, 134, 139 \rangle$ is almost symmetric and its Hilbert function is $H_S=[1, 12, 17, 16, 25, 30 \rightarrow]$. Therefore $H_S$ decreases, but $H_S(h-2) \leq H_S(h)$ for any $h \geq 2$.

 \begin{proposition}\label{to1}
Assume that $|Ap_2|=3, |Ap_3|=1$, and $H_S$ is decreasing. Let $\e$ be the minimum level in which the Hilbert function of $S$ decreases:
\enui
\item If $\e\geq 3$, then
   $H_S(h)=H_S(\ell   -1)$ for all $h \in [1,\ell -1]$. Further,  $H_S(\ell   -2)-H_S(\ell   )=1$.
\item If $S$ is almost symmetric then   $\alpha_m=d n_1$ and $kn_1 \in A$ for any $k\leq d$ (see Theorem \ref{nari} and Proposition \ref{to} for the notation).
\item If $S$ is almost symmetric, then  $\ell\geq 3$.  
\denu
 \end{proposition}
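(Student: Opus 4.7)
The plan is to prove the three parts in sequence, leveraging Proposition \ref{to} for the explicit structure of $C_h$ and $D_\ell$, and Theorem \ref{nari} for the symmetry of the Ap\'ery set under almost symmetry.

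For (i), I start from the minimality of $\ell$: the function $H_S$ is non-decreasing on $[0,\ell-1]$, hence $|D_h|\leq |C_h|=h+1$ for $2\leq h\leq \ell-1$. Proposition \ref{to} also provides the disjoint decomposition $C_h=Ap_h\cup (D_{h-1}^h+e)$ for $2\leq h\leq \ell$, together with $|Ap_h|=1$ for $3\leq h\leq \ell$. Combining these forces $|D_{h-1}^h|=h$, and hence $|D_{h-1}|\geq h$. Together with the previous upper bound this yields $|D_{h-1}|=h$ for $3\leq h\leq \ell$, i.e.\ $H_S(h-2)=H_S(h-1)$ on that range, so $H_S$ is constant on $[1,\ell-1]$. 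The second assertion is then immediate: $H_S(\ell-2)-H_S(\ell)=H_S(\ell-1)-H_S(\ell)=|D_\ell|-|C_\ell|=(\ell+2)-(\ell+1)=1$, using the explicit description of $D_\ell$ in Proposition \ref{to}.

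For (ii), I first verify that $kn_1\in A$ for $0\leq k\leq d-1$: if $kn_1-e$ were a pseudo-Frobenius number, then $(kn_1-e)+n_1=(k+1)n_1-e$ would lie in $S$; but $(k+1)n_1\in Ap$ since $k+1\leq d$, a contradiction. Hence $kn_1-e\notin \PF(S)$, so $kn_1\in A$ by Theorem \ref{nari}. To show $\alpha_m=dn_1$ it suffices to determine $ord(\alpha_m)$. Orders $k$ with $3\leq k\leq d-1$ are excluded directly, since then $\alpha_m\in Ap_k=\{kn_1\}$ gives $\alpha_m=kn_1<(k+1)n_1\in Ap$, violating $\alpha_m=\max Ap$. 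For $ord(\alpha_m)\in\{1,2\}$, I invoke Nari's symmetry: from $(d-1)n_1\in A\setminus\{0,\alpha_m\}$ we get $\alpha_m-(d-1)n_1\in A\subseteq S$, and the subadditivity of $ord$ forces $ord(\alpha_m-(d-1)n_1)\leq ord(\alpha_m)-(d-1)\leq 3-d$. Since this is non-positive for $d\geq 3$, the only possibilities are $\alpha_m=(d-1)n_1$ or an impossible negative order; the former contradicts $\alpha_m=\max Ap\geq 2n_2>2n_1$ (using $n_1<n_2$). Hence $ord(\alpha_m)=d$, so $\alpha_m=dn_1$, and this simultaneously places $dn_1\in A$. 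This order-versus-symmetry calculation is the main obstacle.

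For (iii), I argue by contradiction. Assume $\ell=2$; then $(\ell,d)\neq(3,3)$, so Proposition \ref{to} gives $(d+1)n_1\in D_2+e$, whence $(d+1)n_1-e\in D_2\subseteq M\setminus 2M$. Any element of $M\setminus 2M$ is a minimal generator of $S$, hence lies in $Ap_1\subseteq Ap$. By (ii), $\alpha_m=dn_1=\max Ap$, so $(d+1)n_1-e\leq dn_1$. But $n_1\in Ap_1$ forces $n_1>e$, whence $(d+1)n_1-e=dn_1+(n_1-e)>dn_1$, a contradiction.
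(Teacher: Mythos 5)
Your proof is correct and, for parts (i) and (ii), runs along essentially the same lines as the paper's: (i) combines $|C_h|=h+1$, $|Ap_h|=1$ and the decomposition $C_h=Ap_h\cup(D_{h-1}+e)$ from Proposition \ref{to}, and (ii) applies Nari's symmetry to $(d-1)n_1\in A$ together with superadditivity of the order, which is exactly the mechanism the paper uses to exclude $\alpha_m=2n_2$ (you phrase it as excluding $ord(\alpha_m)\le 2$ and rule out the intermediate orders via $Ap_k=\{kn_1\}$). Two cosmetic slips, both immediately repairable from material you already have: your exclusion of $\alpha_m=(d-1)n_1$ via ``$\alpha_m\ge 2n_2>2n_1$'' only literally covers $d=3$, while for $d\ge 4$ it follows from $ord((d-1)n_1)\ge d-1\ge 3$ against the running hypothesis $ord(\alpha_m)\le 2$ (or simply from $(d-1)n_1\ne\alpha_m$, which you get from $dn_1\in Ap$); and in (iii) the minimal generator $e$ itself is not in $Ap_1$, so you should add that $(d+1)n_1-e\ne e$, which is clear since $n_1>e$. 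In part (iii) you take a mildly different and in fact shorter route than the paper: instead of invoking the Nari pairing to produce $n\in Ap$ with $(d+1)n_1-e+n=dn_1+ke$, $k\in\{0,1\}$, and deriving the impossible relation $n_1+n=(1+k)e\le 2e$, you observe that the order-one element $(d+1)n_1-e$ lies in the Ap\'ery set and exceeds $\alpha_m=dn_1$, contradicting maximality; this yields the same contradiction more directly, still resting on part (ii).
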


\begin{proof}
  {\it i}. By Proposition \ref{to}, if $2<h<\ell$ we have $C_{h}= (D_{h-1}+e) \cup Ap_{h}$ and then $|D_{h-1}|=|C_{h}|-1$, since $|Ap_h|=1$. Moreover, in this range, we have $|C_{h}|=|C_{h-1}|+1$, because of the previous proposition. Hence for any $h=2, \dots, \ell   -1$ we have 
$$
H_S (h-1) - H_S (h)=|D_h|-|C_h|=|C_{h+1}|-1-(|C_{h+1}|-1)=0. 
$$ 
Consequently we get $H_S(1)=H_S(2)= \dots = H_S(\ell   -1)$. As for the last part of the statement it is enough to note that, by the previous proposition, we have 
$$H_S(\ell-1)-H_S(\ell)=|D_{\ell}|-|C_{\ell }|=\ell+2-(\ell+1)=1.$$ 
\noindent {\it ii}. Of course $\alpha_m$ is the greatest element of the Ap\'ery set and in our case it can be either $dn_1$ or $2n_2$. If $\alpha_m=2n_2$, then there would exist $n \in Ap$, such that  $(d -1)n_1+n=2n_2$, but it is impossible, since $d \geq 3$ implies $ord(2n_2)\geq 3$.  Clearly  $k n_1\in A$, because $k n_1+(d-k)n_1=dn_1$.   \\
{\it iii}.  Assume $\ell= 2$. By Proposition \ref{to}, $(d +1)n_1-e\in D_2$  and so  $ord ((d +1)n_1-e)=1$;  
consequently, if $S$ is almost symmetric, there exists $n \in Ap$ such that   $(d +1)n_1-e+n=d  n_1 +ke,$ with either $k=0$ or $k=1$. Hence $n_1+n= (1+k)e\leq 2e$, impossible.  
\end{proof}

The next construction is based on Proposition \ref{to} and indeed we will prove that it defines almost symmetric numerical semigroups with decreasing Hilbert functions satisfying the assumptions of Proposition \ref{to}.

\begin{construction}\label{asd}
{\it Let $\ell\in \NN, \ell\geq 4,\, \ell   \notin\{ 14+22k, \, 35+46k \ | \ k\in\NN\},$   let  
\\[1mm] \centerline{$e:= \ell^2+3\ell  +4 $}\\[1mm]
 $\left[\begin{array}{llllllll}n_1:=\ell  ^2+5\ell  +3&=&e+(2\ell  -1),  & n_2:= 2\ell  ^2+3\ell  -2&=&e+(\ell  ^2-6) , 
& {\it if}&  \ell  \,\, {\it  odd} \\[1mm]
 n_1:=\ell  ^2+4\ell  +1&=&e+( \ell  -3) , & n_2:= 2\ell  ^2+2\ell  -2&=&e+(\ell  ^2-\ell  -6) , & {\it if}& \ell  \,\, {\it even}\
\end{array}\right.$\\[2mm]
and let $S$ be the semigroup generated by the subset $\Gamma\subseteq \NN$   \\[1mm]
\centerline{  $\Gamma= \{e,n_1 ,n_2\}\,\cup\{t_1,t_2\}\cup   \{s_{p,q}  \}\!\cup\! \left\{r_{p,q}\right\}\setminus\{n_1\!+\!n_2, 2n_2\} $}
 where:   \\[2mm]
\centerline{$  
 \begin{array}{cllr}
  \{s_{p,q} \}\,\, &= 
  \left\{\,pn_1\!+\! qn_2\!-\! (p\!+\! q\!-\!  2)e \ | \  0\leq p\leq \e, \, \,1\leq q\leq \e+1, \,  \,\,  2\leq p\!+\! q\leq \ell  \!+\! 1 \right\}
\\[2mm]
\{r_{p,q}\}\,\,&=\{ \, \ell  n_1 \!+\!   e - s_{p,q} \ | \ 2\leq p \!+\!   q\leq \ell  \!+\!  1,\,\,p\geq 1,\,\,q\geq 1\,\}
 \\[2mm] 
  t_1   &=    (\ell  \!+\! 1)n_1 \!-\! (\ell  \!-\! 1)  e, 
  \\[2mm]
  t_2  &=\e n_1+e-t_1=(\e-1)e-n_1
  \\[2mm]
|\{s_{p,q} \}|&= (\ell^2  +3\ell ) /2:=u,  \quad |\{r_{p,q} \}|= (\ell^2  + \ell ) /2,  \quad |\Gamma|=|\{s_{p,q} \}|+|\{r_{p,q}\}|+3=e\!-\!\e\!-\!1    \\[2mm]
  (\ell  -1)e&=- \ell   n_1 + (\ell  +2)n_2  \end{array}  $  }  }
 \end{construction}
We note that the elements $\{e, n_1, n_2, t_1\}\cup\{s_{p,q} \}$ are given to obtain  the structure of  $S$ required in Proposition {\ref{to}}, with $d=\ell$, while the elements $\{r_{p,q} \}\cup\{t_2\}$, impose the almost symmetry of $S$ following Theorem \ref{nari}   (with $\alpha_m=\e n_1$, and $B\supseteq \{t_1\} \cup \{s_{p,q} \}\setminus \{s_{0,\e+1}\}$). \ 
In this construction, for $s \in\{qn_2-(q-2)e, 2\leq q\leq \ell  +1\} \cup\{n_2\}\cup\{  p n_1, 1\leq p\leq \ell  \}$, we don't need to add the corresponding $\e n_1\!+\!e\!-\!s $, or $\e n_1-s$, because such element   is already inside $S$ (see Lemma \ref{costr}.{\it\,i}).
\begin{remark}  
 {\rm  Looking  for  an almost symmetric semigroup  $S$ satisfying   Proposition \ref{to}, with $d=\e$, it is natural to impose $e\geq\ell  ^2+3\ell  +4$.
 In fact the first idea to construct this   semigroup  is to impose that a system of generators of $S$   contains the set \\
 \centerline{$ \{e,n_1 ,n_2\} \cup \{\e n_1-n_2\}\cup\{t_1,t_2\}\cup \{s_{p,q}  \} \cup \big\{\alpha_m+e-s , s \in \{s_{p,q} \}\big\} \setminus\{n_1\!+\!n_2, 2n_2\} $.}
   By counting the number of conditions  for a given $\e$,   we get \\
   \centerline{$e(S)\geq {4+2\displaystyle{\frac{  \ell^2+3 \ell   }{2}}} =\ell  ^2+3\ell  +4$.} 
 Further, following Proposition \ref{to}, we need \\
 \centerline{ $Ap= Ap_1\cup \{ 2n_1,n_1+n_2,2n_2\}\cup\{ kn_1,3\leq k\leq \ell  \}$ } thus the embedding dimension of $S$ must be $\nu=e(S)-\ell  -1$. If $|\{r_{p,q}  \}| = |\{s_{p,q} \}|-\e$ and $\e n_1-n_2\in\{s_{p,q}\}$, we could fix the minimal value  $e=\ell  ^2+3\ell  +4,$ for the multiplicity. This happens if we define $e, n_1$, and $n_2$   as in Construction \ref{asd}.  In fact  this choice gives    the basic relation  \centerline{\quad$\ell   n_1=(\ell  +2)n_2 -(\ell  -1)e$ }
 which  assures that,   for $ \,2\leq q \leq \ell   ,\,  \e n_1+e-s_{0,q}\in \{s_{p,q}\}$  and so it reduces the number of independent conditions to  $2|\{s_{p,q} \}| -\e +3$  \,\,(see Lemma \ref{costr}).} 
\end{remark}  
 We give some examples before to prove of the exactness of the construction.
\begin{example}\label{ex4-7} In this example we show the almost symmetric semigroups constructed by means of the above algorithm for $\e=4,5 $ and the Hilbert function $H_S$ of $R=k[[S]]$.
\begin{enumerate}
\item[$\e\!=\!4.$] The semigroup $S$ is minimally generated by 
$\{ 32$,  33$=n_1$,  38$=n_2$, 69, 72, 73, 74, 75, 77, 78, 79, 
  80, 81, 82, 83, 84, 85,  86, 87, 88, 89, 90, 91, 92, 93, 94, $95\}.$ Moreover  {$Ap_2=\{66,71,76\},\,\, Ap_3=\{99=3n_1\},\,\, Ap_4=\{132=\e n_1\},$}  
$\PF(S)=\{ $37, 39, 40, 41, 42, 43, 44, 45, 46, 47, 48, 49, 50, 
  51, 52, 53, 54, 55, 56, 57, 58,59, 60, 61,63, $100  \} $.  \\ \centerline{$H_S=[ 1,27, 27, 27, 26, 27, 29, 30, 31, 32 \rightarrow]$}.

\item[$\e\!=\!5.$] The minimal generating system of $S$ is $\{ $44, 53$=n_1$, 63$=n_2$, 117, 125, 127, 134, 135, 136, 137, 
  142, 143, 144, 145,  146, 147, 152, 153, 154, 155, 156, 157, 162, 163, 164, 
  165, 166, 167, 172, 173, 174, 175, 182, 183, 184, 192, 193, 202$\}$. \\{$Ap_2=\{106,116,126\},\,\, Ap_3=\{159\},\,\, Ap_4=\{212\},\,\,Ap_5=\{265\}$} 
$\PF(S) = \{ $72, 73, 81, 82, 83, 90, 91, 92, 93, 98, 99, 100, 101, 
  102, 103,  108, 109, 110, 111, 112, 113, 118, 119, 120, 121, 122, 123, 128, 129, 130, 131, 138, 139, 140, 148, 149, 221 $\}$. \\   \centerline{$H_S=[ 1, 38, 38, 38, 38, 37, 44 \rightarrow ]$}
\denu
\end{example} 
\noindent To validate Construction \ref{asd}, we need  some technical lemmas proved in the appendix.

\begin{theorem}\label{l1} With the assumptions of Construction \ref{asd}:  \enui
\item  The ring $R=k[[S]]$ is almost Gorenstein with Hilbert function decreasing at level $\e$: 
$$H_R=[1,\nu,\nu,\dots,\nu, \nu-1, H_R(\ell+1) \dots].$$

\item The embedding dimension of $R$ is $\,\,\nu(R)= e-(\ell  +1)=\e^2+2\e+3$. 
 \item The Cohen-Macaulay type of $R$ is $\,\,t(R)=\nu(R)-1=\e^2+2\e+2$.
\denu
\end{theorem}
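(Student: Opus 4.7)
The plan is to establish the structure of the Apéry set $Ap(S)$ with respect to $e$, and then derive each of the three claims by combining this structure with Nari's Theorem \ref{nari} and Proposition \ref{to1}.

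First I would use the technical lemmas from the appendix (Lemma \ref{costr}) to show that $\Gamma$ is a minimal system of generators of $S$ and that
\[
Ap_1 = \Gamma \setminus \{e\}, \quad Ap_2 = \{2n_1, n_1+n_2, 2n_2\}, \quad Ap_k = \{k n_1\} \text{ for } 3\leq k\leq \ell, \quad Ap_k = \emptyset \text{ for } k>\ell.
\]
The main algebraic ingredient is the identity $(\ell+2)n_2 - \ell n_1 = (\ell-1)e$, verifiable by direct computation in both parity cases, which ensures that $\ell n_1 \in Ap_\ell$ and that the elements $s_{p,q}$, $r_{p,q}$, $t_1$, $t_2$, $n_2$ all lie in $Ap_1$. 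A cardinality count then gives $|Ap_1| = e-\ell-2$, and hence $\nu(R) = |\Gamma| = 3 + 2 + \frac{\ell^2+3\ell}{2}+\frac{\ell^2+\ell}{2}-2 = \ell^2+2\ell+3 = e-\ell-1$, which proves (ii).

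For almost symmetry, I would exhibit the partition $Ap = A \cup B$ required by Nari's theorem, with $\alpha_m = \ell n_1$. Setting $A := \{0, n_1, 2n_1, \dots, \ell n_1\} \cup \{n_2, s_{0,\ell+1}\}$, the pairings $kn_1 + (\ell-k)n_1 = \ell n_1$ and $n_2 + s_{0,\ell+1} = \ell n_1$ (the latter again from the basic identity) confirm condition (ii) of Theorem \ref{nari} for $A$. For $B := Ap\setminus A$, the required pairings summing to $\ell n_1 + e$ are provided by the construction itself: $(s_{p,q}, r_{p,q})$ for $p,q\geq 1$ by definition of $r_{p,q}$; $(s_{0,q}, s_{0,\ell+2-q})$ for $2\leq q\leq \ell$, since $s_{0,q}+s_{0,\ell+2-q}=(\ell+2)n_2-\ell e=\ell n_1+e$; and $(t_1,t_2)$ by definition of $t_2$. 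Since $|A|=\ell+3$, Nari's formula $t(S) = e-m = e-(|A|-1)$ yields $t(R) = \ell^2+2\ell+2 = \nu(R)-1$, proving (iii) and the almost Gorenstein half of (i). The Hilbert function part of (i) then follows from Proposition \ref{to1}: the Apéry structure places us in its setting ($|Ap_2|=3$, $|Ap_3|=1$), and using Proposition \ref{to} with $d=\ell$ to identify $C_\ell$ and $D_\ell$ gives $|D_\ell|-|C_\ell|=(\ell+2)-(\ell+1)=1>0$, so $H_S$ decreases at level $\ell$. Proposition \ref{to1}(i) and (iii) then give $H_S(h)=\nu$ for $1\leq h\leq \ell-1$ and $H_S(\ell)=\nu-1$, delivering the claimed profile.

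The main obstacle is the very first step: showing that the Apéry classes $Ap_k$ have the precise form asserted. Verifying that each $s_{p,q}$, $r_{p,q}$, $t_j$ actually sits in $Ap_1$ (i.e., is not a sum of two nonzero elements of $M$), that $kn_1$ lies in $Ap_k$ rather than in $Ap_{k+1}$, and that no further elements of higher order appear beyond $\ell n_1$, requires delicate arithmetic exploiting the congruence $\ell n_1 \equiv (\ell+2)n_2 \pmod e$ together with a parity-dependent case analysis on $n_1$ and $n_2$. These technicalities are exactly what is relegated to the appendix.
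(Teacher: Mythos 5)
Your overall strategy (pin down the Ap\'ery set and its order classes, apply Nari's theorem, then count) is the same as the paper's, but two of your steps are circular. The most serious one concerns the Hilbert function: Propositions \ref{to} and \ref{to1} both carry ``$H_S$ decreasing'' as a hypothesis, and the $\ell$ occurring in them is \emph{by definition} the minimal level at which $H_S$ decreases. So you cannot invoke Proposition \ref{to} ``with $d=\ell$'' to identify $C_\ell$ and $D_\ell$ and then conclude that $H_S$ decreases at level $\ell$: that presupposes exactly what has to be proved, and likewise \ref{to1}(i),(iii) cannot deliver the profile $[1,\nu,\dots,\nu,\nu-1,\dots]$ before the decrease is established. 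This is the heart of Theorem \ref{l1}(i), and the paper does not deduce it from Proposition \ref{to} (which only motivated the construction); it proves directly, by induction on $k$, that $C_k=\{kn_1,(k-1)n_1+n_2,\dots,kn_2\}$ for $2\le k\le \ell+1$ and that $D_k+e=C_{k+1}\setminus\{(k+1)n_1\}$ for $k<\ell$ while $|D_\ell|=\ell+2$, using the distinct-residues Lemma \ref{costr1}, the order-jump claim based on \cite[Proposition 2.2.1]{OT}, and Proposition \ref{to1}(iii) only to exclude a decrease at level $2$; the function is then read off from $H_S(k-1)-H_S(k)=|D_k|-|C_k|$. Your plan omits this induction entirely.

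The Nari step has a similar, if smaller, gap. In Theorem \ref{nari} the partition $Ap=A\cup B$ is not arbitrary: $B$ is by definition $\{f'+e \mid f'\in\PF(S)\setminus\{f(S)\}\}$ and $m=e-t(S)$. Exhibiting \emph{some} partition satisfying the two pairing identities does not imply almost symmetry: for $S=\langle 4,5,11\rangle$ the partition $A=\{0,11\}$, $B=\{5,10\}$ satisfies both pairing conditions, yet $S$ is not almost symmetric. A fortiori you cannot obtain the type from ``$t(S)=e-m$'' applied to your chosen $A$; that identity presupposes that your $B-e$ equals $\PF(S)\setminus\{f(S)\}$, which is precisely what must be verified (equivalently, that every element of $B$ is maximal in the Ap\'ery poset). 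This identification is what Proposition \ref{apery} asserts, using the appendix lemmas, before Nari's theorem is applied; only then is the count $t(R)=|B|+1=\ell^2+2\ell+2$ legitimate. Your treatment of (ii) is fine, granted the Ap\'ery-set structure whose verification you rightly flag as the main technical burden.
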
 
\begin{proof} 
{\it i}. By Proposition \ref{apery}, we know the Ap\'ery set of $S$ and the subsets $A,B$ of Theorem \ref{nari}. Then $S$ is almost symmetric, since the elements of $A $ and $B$ verify the conditions of Theorem \ref{nari}, respectively by Lemma \ref{costr}.{\it i} \, and by the  definition  of $\{r_{p,q}\}$.\\
Now we show that the Hilbert function of $k[[S]]$ decreases at level $\e$: we shall prove that \\
$ \bullet$  for each $k\in[2,\e+1]$, $ C_{k }=\{ k    n_1 , (k   -1)n_1+n_2 , (k   -2)n_1+2n_2,  \dots,k   n_2 \},\,\, |C_{k }|=k+1 $ 
\\ 
$ \bullet$ for each $k\in[2,\e-1],$
$D_{k}+e=\{k   n_1+n_2,(k   -1)n_1+2n_2, \dots,(k  +1)n_2 \}=C_{k+1}\setminus \{(k+1)n_1\},$
\par $|D_k|=k+1 $
\\
$ \bullet$ $D_{\ell }+e=\{(\e  +1)n_1, \ell   n_1+n_2,(\ell   -1)n_1+2n_2, \dots,(\ell   +1)n_2 \},\,\, |D_{\ell }|=\e+2.$
 \\[2mm]
 Hence the thesis follows, recalling that for each $k\geq 2: \,\,H(k)=H(k-1)+|C_{k }|- |D_{k }|$.\\[2mm] 
 Let $s=an_1+bn_2, $ \, with $   \,a+b=k\in[3,\e+1]$  and $a<k$, if $k\neq \e+1$. First we prove that \\
 \centerline{ if\,\, $an_1+bn_2= d +e,$ with $d\in D_h$, then $ord(an_1+bn_2)\!=\!h+1 \quad (1)$}  \\ In fact, if  $\ ord(an_1+bn_2)=k'=h+p,$ with $p\geq 2$, we know, by \cite[Proposition 2.2.1]{OT}, that given a maximal expression $an_1+bn_2=\sum a_i n_i ,\,$ with $n_i\in Ap_1,\,\sum a_i=k', \,$ for any  $y=\sum b_i n_i , \,  0\leq b_i\leq a_i,\, \sum b_i=q\leq p+1$, then $y\in Ap_q$. Since $k'>3$ and $Ap_3=\{3n_1\}$, this would imply $an_1+bn_2=k'n_1$, impossible by Lemma \ref{costr1}.{\it ii}. Hence $an_1+b n_2\in C_{h+1}\cap (D_{h}+e)$. 
 
By definition, \,$C_2=Ap_2$  and  by   Proposition \ref{apery}, $D_2 \supseteq \{2n_1+n_2-e, n_1+2n_2-e, 3n_2-e \}$. Then $D_2=\{2n_1+n_2-e, n_1+2n_2-e, 3n_2-e \}$, otherwise the Hilbert function decreases at level 2, impossible by Proposition \ref{to1}.{\it iii}. Hence $C_3=(D_2+e)\cup \{3n_1\}$. 

Now we proceed by induction on $k$. First we recall that, if $x\in C_k$  has maximal representation $x=\sum a_in_i, \sum a_i=k, n_i\in Ap_1$ and $y=\sum b_in_i, 0\leq b_i\leq a_i$ with $\sum b_i=h$, then $y\in C_h$ by \cite[Proposition 1.4.1]{OT}. In our case $C_2=\{2n_1,n_1+n_2,2n_2\}$, hence $ C_k\subseteq \{an_1+bn_2 \ | \ a+b=k\}$.
Assume $3\leq k\leq \e $ and the thesis true for $k-1$.
Therefore we know the structures of $C_2, ... C_k$,   $D_2,...,D_{k-1}$. Let  $an_1+bn_2$ with $  \,a+b=k+1\in[4,\e+1]$  and $a<k+1$, if $k\neq \e$. 
Then, by Lemma \ref{asd},\,\, $s_{a,b}=an_1+bn_2-(k-1)e\in Ap_1$ is such that $ord( s_{a,b}+(k-1)e)\geq k+1$. Moreover, since $s_{a,b}+e \notin D_2$, we know that its order is 2; hence there exists $r\in[1,k-2]$ such that $ord ( s_{a,b}+re) =r+1$ and 
$ord(s_{a,b}+(r+1)e) >r+2$ i.e. $s_{a,b}+re\in D_{r+2},$ with $r+2\leq k $. If $r+2<k$, by induction there would exist  $ a'n_1+b'n_2-e\in  D_{r+2}$  such that $s_{a,b}+re=a'n_1+b'n_2-e=s_{a',b'}+(a'+b'-3)e$: impossible because $s_{a,b}$ and $s_{a',b'}$ have distinct residues $(mod\, e),$ by Lemma \ref{costr1}.{\it ii}. Hence $r=k-2,\,\,  an_1+bn_2-e\in D_k $ and $an_1+bn_2\in C_{k+1}$ by $(1)$. This proves {\it i\,}.
\\[2mm] 
   {\it ii}. Since there are $\ell+1$ elements of the Ap\'ery set with order greater than $1$, we have $\nu(R)=|Ap_1|+1=e-1-(\e+1)+1=\e^2+2\e+3$. 
 \\[2mm]
{\it iii}.   The Cohen-Macaulay type of $k[[S]]$ is the cardinality of the Pseudo-Frobenius set of $S$: \\
 $t(R)=|B|+1=\displaystyle{\frac{  \ell^2+3 \ell}{2} -1+ \frac{\ell^2+  \ell}{2} +3}=\e^2+2\e+2$.    \end{proof}

Without using Construction \ref{asd}, but by similar techniques, it is possible to construct other almost symmetric semigroups such that    $H_S(h-1)>H_S(h)$, even if $h=2,3$,   as shown in the first two semigroups of the next example. Moreover the last one is another almost symmetric semigroup with $|Ap_2|=3, |Ap_3|=1$  and decreasing Hilbert function.

\begin{example} \label{AlmostSymmetric}
\enui
\item The numerical semigroup $S=\langle 33, 41, 42, 46, 86, 90, 91, 95, 96, 97, 98,$  $100, 101, 103, 104, 105, 106, 109, 110, 111, 113, 114, 118, 122 \rangle$ is almost symmetric and its Hilbert function $[ 1, 24, 23, 23, 31, 33 \rightarrow ]$ decreases at level 2. Moreover \\ 
\centerline{$Ap_2=\{82, 83, 84, 87, 88, 92, 127 \},\,\, Ap_3=\{126 \},\,\, Ap_4=\{168 \},\,\, Ap_k=\emptyset$ \ if $k \geq 5$. \ \ \ \ \ }

\item The numerical semigroup $S=\langle 32, 33, 38, 58, 59, 60, 61, 62, 63, 67,68,69,72, 73, 74,75,$ $77, 78, 79, 80, 81, 82, 83, 84, 85, 86, 87, 88 \rangle$ is almost symmetric with   Hilbert function  \,\,
$[ 1, 28, 28, 27, 27, 29, 30, 31, 32 \rightarrow ]$ decreasing at level 3 and \\ 
\centerline{$Ap_2=\{ 66, 71, 76, 121\},\,\, Ap_k=\emptyset$ \ if $k \geq 3$. \ \ \ \ \ }
 \item The numerical semigroup $S=\langle 30,33,37,64,68,71,73,75,76,77, 78,79,80,81,82, 83, $ $84,85,86,87,88,89,91,92,94,95,98,101 \rangle$ is almost symmetric with   Hilbert function \,\,$[ 1, 25, 25, 25, 24, 27, 28, 29, 30 \rightarrow ]$  \\   
\centerline{$Ap_2=\{66, 70, 74\},\,\, Ap_3=\{99\},\,\, Ap_4(S)=\{132\},\,\,Ap_k=\emptyset$ \ if $k \geq 5$.\ \ \ \ \ } 
\denu
\end{example}

\section { \bf The Gorenstein case}
 In this section we give explicit examples of local one-dimensional Gorenstein rings with decreasing Hilbert function and other interesting examples. Several computations are performed by using the GAP system \cite{GAP} and, in particular, the NumericalSgps package \cite{DGM}.\\[2mm]
Luckily, if $R$ is a numerical semigroup ring and $b=t^m \in R$, with $m$ odd, then $R(I)_{0,-b}$ is a numerical semigroup ring and it is exactly the ring associated with the so-called numerical duplication. Anyway we note that in general, for other choices of $a$ and $b$, the ring $\RR$ is not a numerical semigroup. For example if $a=-1$ and $b=0$ it is isomorphic to the amalgamated duplication that, in this case, is reduced but not a domain; while $R(I)_{0,0}$ is isomorphic to the idealization and then it is not reduced. In this section we describe the particular case of the numerical duplication, that is probably the easiest case; we show the most notable and simple examples among the various we have constructed. 

Let $S$ be a numerical semigroup, $b \in S$ be an odd integer and $E$ be a proper ideal of $S$. The {\em numerical duplication} of $S$ with respect to $E$ and $b$, introduced in \cite{DS}, is the numerical semigroup
$$
S \du E := \{2 \cdot S \} \cup \{2 \cdot E +b\}, 
$$
where $2 \cdot X=\{2x \ | \ x \in X\}$ for any set $X$; we note that $2 \cdot X$ is different from $2X=X+X$. 

As mentioned above, if $R=k[[S]]$ is a numerical semigroup ring and $b=t^m \in R$ with $m$ odd, it is proved in \cite[Theorem 3.4]{BDS} that\\[2mm]
\centerline{$R(I)_{0,-b}$ is isomorphic to $k[[S \! \Join^m \! E]]$,}\\[2mm] 
where $E:=v(I)$ is the valuation of $I$, see \cite{BDS} for more detail. We recall that $I$ is a canonical ideal of $R$ if and only if $v(I)$ is a proper canonical ideal of $S$; hence $S \du E$ is symmetric if and only if $E$ is a canonical ideal, see also \cite[Proposition 3.1]{DS} for a simpler proof.

It is easy to compute the generators of $S \du E$, in fact if ${\rm G}(S)=\{ n_1, \dots, n_r \}$ is the set of the minimal generators of $S$ and $E$ is generated, as ideal, by $\{m_1, \dots, m_s \}$, then\\[2mm]
\centerline{$S\du E= \langle 2n_1, \dots, 2n_r, 2m_1+b, \dots, 2m_s+b \rangle$.}\\[2mm]
In particular, we recall that $K(S)$ is minimally generated by the elements $f(S)-x$, where $x \in {\rm PF}(S)$, and therefore, if $E=K(S)+z$, the semigroup $S \du E$ is generated by \\[2mm]
\centerline{$\{2n_i,\,\,2(f(S)-x_j+z)+b \,\, | \,\, n_i \in {\rm G}(S), \,x_j \in {\rm PF}(S)\}$;}\\[2mm]
moreover if $S$ is almost symmetric, it follows from Theorem \ref{nari} that $S \du (K(S)+z)$ is minimally generated by $\{2n_i, 2z+b, 2x_j+2z+b \ | \ n_i \in {\rm G}(S), x_j \in {\rm PF}(S) \setminus \{f(S)\} \}$. Finally we remember that if $S=\langle s_1, \dots, s_\nu \rangle$ is a symmetric numerical semigroup then $k[[S]]:=k[[t^{s_1}, \dots,t^{s_\nu}]]$ is a one-dimensional Gorenstein local ring for any field $k$.\\[2mm]

\indent  For each $h\geq 4$, $h \notin\{ 14+22k, \, 35+46k \ | \ k\in\NN\}$, Construction \ref{asd} allows to produce Gorenstein rings whose Hilbert function decreases at level $h$, 
  while for $h=3$ we can use Example \ref{AlmostSymmetric}.{\it ii}.    The next example is useful to complete the case $h=2$.

\begin{example} \label{h=2}
Consider the numerical semigroup 
\begin{equation*}
\begin{split}
S=\langle &68, 72, 78, 82, 107, 111, 117, 121, 158, 162, 166, 168, 170, 172, 174, 176, 178, 180, 182, 184, \\
&186, 188, 190, 192, 194, 196, 197, 198, 200, 201, 202, 205, 206, 207, 209, 210, 211, 213, 215, \\
&217, 219, 221, 223, 225, 227, 229, 231, 233, 235, 237, 239, 241, 245, 249 \rangle.
\end{split}
\end{equation*}
 It is almost symmetric, has type $53$, and its Hilbert function is $[ 1, 54, 52, 50, 54, 64, 68 \rightarrow ]$. Consequently by Proposition \ref{Formula}, $k[[S\du K]]$ is a Gorenstein ring and has Hilbert function $[ 1, 107, 106, 102, 104, 118, 132, 136 \rightarrow ]$ decreasing at level 2 for any canonical ideal $K$ and for any odd $b \in S$. 
\end{example} 

The next lemma will allow us to show that in general, even if $R$ is Gorenstein, there are no bounds for $H_R(h-1)-H_R(h)$. 

\begin{lemma} \label{maximal} Let $R^{(0)}$  m be a local ring. The following hold: 
\enui
\item Consider the ring $R^{(i+1)}:=R^{(i)}(\mm^{(i)})_{a^{(i)},b^{(i)}}$, where $\mm^{(i)}$ is the maximal ideal of $R^{(i)}$ and $a^{(i)},b^{(i)}$ are two elements of $R^{(i)}$. Then
$$H_{R^{(i)}}(h)=2 H_{R^{(i-1)}}(h)= \dots = 2^i H_{R^{(0)}}(h) \,\,{\text for \ any \ } h>0.$$
\item  If $R^{(0)}$ is almost Gorenstein and has  Cohen-Macaulay  type $t$, then $R^{(i)}$ is almost Gorenstein and, if $R^{0}$ is not a DVR, it has Cohen-Macaulay type $\, 2^i t + 2^i-1$.
\denu
\end{lemma}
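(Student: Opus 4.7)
I would prove both parts by induction on $i$, reducing each to a single base step from $R^{(0)}$ to $R^{(1)}$; iteration then yields the closed-form expressions.

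For part (i), fix $R = R^{(i)}$ with maximal ideal $\mm$, and write $\mathfrak{n}$ for the maximal ideal of $R(\mm)_{a,b}$. For $h \geq 2$, Proposition \ref{HilbFun} applied with $I = \mm$ immediately gives $\ell_R(\mm \cdot \mm^{h-1}/\mm \cdot \mm^h) = \ell_R(\mm^h/\mm^{h+1}) = H_R(h)$, hence $H_{R(\mm)_{a,b}}(h) = 2 H_R(h)$. For $h = 1$ I would argue on the cotangent space: structurally $R(\mm)_{a,b} = R \oplus \mm t$ as $R$-modules, with $\mathfrak{n} = \mm \oplus \mm t$; applying the defining relation $t^2 \equiv -at - b$ to elements of $\mm^2$ yields $(\mm t)^2 = \mm^2 t^2 \subseteq \mm^2 t + \mm^2$, so $\mathfrak{n}^2 = \mm^2 \oplus \mm^2 t$ as $R$-modules and $\dim_k(\mathfrak{n}/\mathfrak{n}^2) = 2 \nu(R) = 2 H_R(1)$. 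Iterating over $i$ gives part (i).

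For part (ii), the inductive step reduces to the single claim: if $R$ is almost Gorenstein of Cohen-Macaulay type $t$ and is not a DVR, then $R(\mm)_{a,b}$ is almost Gorenstein of type $2t + 1$. Granting this, the recursion $t(R^{(i+1)}) = 2\,t(R^{(i)}) + 1$ with $t(R^{(0)}) = t$ unwinds to $t(R^{(i)}) = 2^i t + 2^i - 1$ by a direct induction on $i$.

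The main obstacle is this single base step, which requires an explicit canonical module for $R^{(1)} := R(\mm)_{a,b}$. I would construct $\omega^{(1)}$ from $\om$ as an $R$-submodule of the total ring of fractions of $R^{(1)}$ of the form $\om \oplus \om t$, carrying the natural $R^{(1)}$-action induced by the defining quadratic relation, and sitting between $R^{(1)}$ and its integral closure. The almost-Gorenstein condition $\mathfrak{n}\omega^{(1)} = \mathfrak{n}$ then reduces by a block computation to the hypothesis $\mm\om = \mm$ together with $\mm \cdot R = \mm$. For the Cohen-Macaulay type I would compute $\dim_k(\omega^{(1)}/\mathfrak{n}\omega^{(1)})$: the two $R$-summands each contribute $t = \mu_R(\om)$ generators, for a total of $2t$, and one additional generator appears because the distinguished element $1 \in R \subseteq \om$, which collapses $\mu_R(\om/R)$ from $t$ to $t-1$ in the almost Gorenstein setting, cannot be absorbed twice in the $R^{(1)}$-module structure. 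The DVR hypothesis is needed to exclude the degenerate case in which $\mm$ is itself a canonical ideal of $R$: there $R(\mm)_{a,b}$ is Gorenstein of type $1$ by \cite[Corollary 3.3]{BDS}, contradicting the predicted value $2t + 1 = 3$.
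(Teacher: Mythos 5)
Part (i) of your proposal is correct and is essentially the paper's argument: for $h\geq 2$ it is exactly the ``straightforward application'' of Proposition \ref{HilbFun} with $I=\mm$, and your cotangent-space computation supplies the case $h=1$ (which Proposition \ref{HilbFun}, as restated in the paper, does not literally cover), so that part is fine. The divergence is in part (ii): the paper does not reprove the base step at all, it simply invokes \cite[Proposition 2.9]{BDS2} (``$R$ is almost Gorenstein iff $R(\mm)_{a,b}$ is, and then the type is $2t+1$ if $R$ is not a DVR'') and unwinds the recursion $t_{i+1}=2t_i+1$. Since you chose to prove that base step yourself, your sketch must stand on its own, and it does not.

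The gap is in your construction of the canonical module. Since $R^{(1)}=R(\mm)_{a,b}\cong R\oplus \mm t$ is a maximal Cohen--Macaulay $R$-module, a canonical module of $R^{(1)}$ is $\operatorname{Hom}_R(R\oplus\mm t,\om)\cong \om\oplus(\om:_{Q(R)}\mm)$ as $R$-modules (with a twisted $R^{(1)}$-action), not $\om\oplus\om t$: one always has $\ell_R\bigl((\om:\mm)/\om\bigr)=1$, and in the almost Gorenstein non-DVR case $\om:\mm$ needs $t+1$ generators --- this second summand, not the element $1\in\om$ ``not being absorbed twice,'' is where the extra $+1$ in $2t+1$ comes from. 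Indeed your own numbers refute your candidate: a module generated by $2t$ elements over $R$ needs at most $2t$ generators over the larger ring $R^{(1)}$, so if $\omega^{(1)}\cong\om\oplus\om t$ then $t(R^{(1)})\leq 2t$, contradicting the value $2t+1$ you are trying to establish. Concretely, for $R=k[[t^3,t^4,t^5]]$ (almost Gorenstein, $t(R)=2$) and the numerical duplication $T=S\!\Join\!^{3}M(S)=\{0,6,8,9,10,\rightarrow\}$, the standard canonical ideal of $T$ is $\{0,2,3,4,5,6,8,\rightarrow\}=2\cdot\bigl(K(S)-M(S)\bigr)\cup\bigl(2\cdot K(S)+3\bigr)$, minimally generated by $\{0,2,3,4,5\}$ (type $5=2t+1$), whereas $2\cdot K(S)\cup(2\cdot K(S)+3)$ --- the translate of your $\om\oplus\om t$ --- is not a canonical ideal of $T$ at all. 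The almost-Gorenstein verification $\frk{n}\,\omega^{(1)}=\frk{n}$ would likewise have to be carried out on the correct module. So either redo the base step with $\om\oplus(\om:\mm)$, or do as the paper does and quote \cite[Proposition 2.9]{BDS2}.
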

 
\begin{proof}
The first point is a straightforward application of Proposition \ref{HilbFun}, while the second one follows from \cite[Proposition 2.9]{BDS2}, which says that $R$ is almost Gorenstein if and only if $R(\mm)_{a,b}$ is almost Gorenstein and in this case, if $R$ is not a DVR,   the Cohen-Macaulay type of $R(\mm)_{a,b}$ is $2t+1$.
\end{proof}

\begin{theorem} \label{corMax}  
For any integers $m \geq 1$ and $h>1, h \notin\{ 35+46k ,\, 14+22k \ | \ k\in\NN\}$, there exist infinitely many non-isomorphic one-dimensional Gorenstein local rings $R$ such that \,\,  $H_R(h-1)-H_R(h) > m$.  
\end{theorem}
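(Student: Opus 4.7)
My plan is to reduce the problem via Corollary \ref{almsym} to finding an almost Gorenstein ring with a positive Hilbert-function gap at the appropriate level, and then to amplify this gap by iterated applications of the $R(\mathfrak{m})_{a,b}$ construction from Lemma \ref{maximal}. For each admissible $h$ I would start by fixing a suitable base almost Gorenstein ring $R^{(0)}_h$. When $h \geq 4$ and $h \notin \{14+22k,\,35+46k\}$, Theorem \ref{l1} supplies $R^{(0)}_h = k[[S]]$ via Construction \ref{asd} with $\ell = h$, whose Hilbert function has the shape $[1,\nu,\nu,\dots,\nu,\nu-1,\dots]$ and hence $H_{R^{(0)}_h}(h-2)-H_{R^{(0)}_h}(h)=1$. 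For $h=3$ I would take the almost symmetric semigroup ring of Example \ref{AlmostSymmetric}.\textit{ii}, for which $H(1)-H(3)=28-27=1$, and for $h=2$ I would take the almost Gorenstein ring of Example \ref{h=2}, whose type $53$ and Hilbert function $[1,54,52,\dots]$ give $t-H(2)=53-52=1$. These three sources together cover every admissible $h$.

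I would then inflate the gap by iterating the maximal-ideal construction: set $R^{(i+1)}_h := R^{(i)}_h(\mathfrak{m}^{(i)})_{a^{(i)},b^{(i)}}$ for arbitrary parameters. Lemma \ref{maximal} guarantees that each $R^{(i)}_h$ is still almost Gorenstein, with $H_{R^{(i)}_h}(j)=2^i H_{R^{(0)}_h}(j)$ for all $j>0$ and Cohen-Macaulay type $t_i=2^i t_0+2^i-1$. After choosing a canonical ideal $I^{(i)}$ of $R^{(i)}_h$, I would form $S^{(i)}_h := R^{(i)}_h(I^{(i)})_{a,b}$, which is Gorenstein by Corollary \ref{almsym}.

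Finally, Corollary \ref{almsym} yields, for $h\geq 3$, the identity $H_{S^{(i)}_h}(h-1)-H_{S^{(i)}_h}(h) = H_{R^{(i)}_h}(h-2)-H_{R^{(i)}_h}(h) = 2^i$, and for $h=2$, $H_{S^{(i)}_2}(1)-H_{S^{(i)}_2}(2) = t_i - H_{R^{(i)}_2}(2) = 2^{i+1}-1$. In either regime the gap grows exponentially in $i$, so fixing any $i_0$ with $2^{i_0}>m$ produces an infinite subfamily $\{S^{(i)}_h : i \geq i_0\}$ satisfying $H_{S^{(i)}_h}(h-1)-H_{S^{(i)}_h}(h) > m$. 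These rings are pairwise non-isomorphic because their multiplicities equal $2^{i+1}e(R^{(0)}_h)$ and are therefore all distinct. The main potential obstacle is the propagation of almost Gorensteinness and the Hilbert-function identity through the iteration, but both are already encoded in Lemma \ref{maximal} and Corollary \ref{almsym}, leaving no essential difficulty.
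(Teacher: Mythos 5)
Your proposal is correct and follows essentially the same route as the paper: reduce via Corollary \ref{almsym} to an almost Gorenstein ring with a gap $H_R(h-2)-H_R(h)>0$ (Theorem \ref{l1} for $h\geq 4$, Example \ref{AlmostSymmetric}.\textit{ii} for $h=3$, Example \ref{h=2} for $h=2$), amplify the gap by iterating Lemma \ref{maximal}, and then pass to $R^{(i)}(\omega)_{a,b}$. The only cosmetic difference is that you argue non-isomorphism via distinct multiplicities while the paper cites distinct Hilbert functions, which amounts to the same thing.
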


\begin{proof}
If $h=2$ consider the ring $R^{(0)}=k[[S]]$ of Example \ref{h=2}, then  $R^{(i)}$ is almost Gorenstein and $H_{R^{(i)}}(1)=54\cdot 2^i,\,\, H_{R^{(i)}}(2)=52 \cdot 2^i, t(R^{(i)})=54\cdot 2^{i}-1$ \, by the previous lemma. We achieve the proof using  Corollary \ref{almsym} applied with  $R=R^{(i)}$: $H_{\RR}(1)-H_{\RR}(2)=t(R)- H_R(2)=54\cdot 2^i-1-52\cdot 2^{i}=  2^{i+1}-1>m,$ if $i\geq i_0=\lfloor \log_2 (m+1) \rfloor$.\\
If $h\geq 3$, consider an almost Gorenstein ring $R$ such that $H_R(h-2)-H_R(h)=n>0$, whose existence we proved in Theorem \ref{l1} for $h\geq 4$ and in Example \ref{AlmostSymmetric}.{\it ii} for $h=3$; then  apply the construction of the previous lemma with $i_0=\lfloor \log_2 (m/n) \rfloor + 1$.  With the notation of the previous lemma, it follows that $H_{R^{(i)}}(h-2)-H_{R^{(i)}}(h)=2^i n > m$ for any $i \geq i_ 0$. \\
Now, for each $h$ of the statement,  if $i\geq i_0$, $\omega$ is a canonical ideal of $R^{(i)}$ and $a,b \in R^{(i)}$, Corollary \ref{almsym} implies that the ring $R^{(i)}(\omega)_{a,b}$ has all the properties we are looking for. \\  
Clearly, for any $i \geq i_0$ we get infinitely many non-isomorphic rings, because their   Hilbert functions are different.
\end{proof}

From the proof it is clear that, for any such $m$ and $h$, in the rings of the previous theorem there are always non-reduced rings (idealization), reduced rings that are not integral domains (amalgamated duplication), and numerical semigroup rings (numerical duplication).

\begin{example}\label{es64} Consider the first numerical semigroup $S$ of Example \ref{ex4-7}. Set $b=33$ and $E=K(S)+101=K(S)+f(S)+1 \subseteq S$. Since we know the generators and the pseudo-Frobenius numbers of $S$, it follows from above that $k[[S \du E]]$ is equal to
\begin{equation*}
\begin{split}
k[[ &t^{64}, t^{66}, t^{76}, t^{138}, t^{144}, t^{146}, t^{148}, t^{150}, t^{154}, t^{156}, t^{158}, t^{160}, t^{162}, t^{164}, t^{166}, t^{168}, t^{170}, t^{172}, t^{174}, t^{176},\\ 
&t^{178}, t^{180}, t^{182}, t^{184}, t^{186}, t^{188}, t^{190}, t^{235}, t^{309}, t^{313}, t^{315}, t^{317}, t^{319}, t^{321}, t^{323}, t^{325}, t^{327}, t^{329}, t^{331}, t^{333},\\ 
&t^{335}, t^{337}, t^{339}, t^{341}, t^{343}, t^{345}, t^{347}, t^{349}, t^{351}, t^{353}, t^{355}, t^{357}, t^{361}]] 
\end{split} 
\end{equation*}
and is a one-dimensional Gorenstein local ring. Moreover Proposition \ref{Formula} implies that its Hilbert function is 
$[ 1, 53, 54, 54, 53, 53, 56, 59, 61, 63, 64 \rightarrow]$. 
\end{example}  

In the next example we show how the construction of Lemma \ref{maximal} and Theorem \ref{corMax} works.

\begin{example} Let $T^{(0)}$ be the second semigroup of Example \ref{ex4-7} and construct the numerical semigroups of Lemma \ref{maximal} applying the numerical duplication, that can be considered a particular case of the lemma.

\begin{itemize}
\item    $T^{(0)}$ is almost symmetric with type $37$ and
$$H_{T^{(0)}}=[1, 38, 38, 38, 38, 37, 44, \rightarrow ];$$ 
\item   $T^{(1)}:=T^{(0)} \!\Join\!^{53} M(T^{(0)})$ is almost symmetric with type $75$ and  
$$H_{T^{(1)}}=[1,76,76,76,76,74,88  \rightarrow ];$$ 
\item  $T^{(2)}:=T^{(1)} \!\Join\!^{141} M(T^{(1)})$ is almost symmetric with type $151$ and 
$$H_{T^{(2)}}=[1,152,152,152,152,148,176  \rightarrow ];$$ 
\item   $T^{(3)}:=T^{(2)} \!\Join\!^{317} M(T^{(2)})$ is almost symmetric with type $303$ and 
$$H_{T^{(3)}}=[1,304,304,304,304,296,352  \rightarrow ];$$ 
\item   $T^{(4)}:=T^{(3)} \!\Join\!^{669} M(T^{(3)})$ is almost symmetric with type $607$ and 
$$H_{T^{(4)}}=[1,608,608,608,608,592,704  \rightarrow ];$$ 
\item   $T:=T^{(4)} \!\Join\!^{1373} K$, where $K:=K(T^{(4)})+f(T^{(4)})+1 \subseteq T^{(4)}$, is symmetric and has Hilbert function 
$$H_{T}=[1,1215,1216,1216,1216,1200,1296,1408 \rightarrow ].$$ 
\end{itemize}
If we are looking for symmetric semigroups with bigger difference between $H(4)$ and $H(5)$, we can continue in this way   before to consider the numerical duplication with respect to a canonical ideal. Anyway, we note that in this example $T$ has $1215$ minimal generators included between $1408$ and $23835$.
\end{example}

\begin{example}
Consider the almost symmetric numerical semigroup 
$$T_0 \! = \! \langle 30,33,37,64,68,69,71,72,73,75,76,77,78,79,80,81,82,83,84,85,86,87,88,89,91,92  \rangle$$
that has Hilbert function $[ 1, 26, 26, 25, 24, 27, 28, 29, 30  \rightarrow ]$. 
Let $K'(T_i)$ be a proper canonical ideal of $T_i$ and $b_i$ an arbitrary odd element of $T_i$. All the following numerical semigroups are symmetric: 
\begin{itemize}
\item The semigroup $T_1:=T_0 \!\Join\!^{b_0} K'(T_0)$ has Hilbert function $$H_{T_1}=[ 1, 51, 52, 51, 49, 51, 55, 57, 59, 60 \rightarrow ];$$ 
\item The semigroup $T_2:=T_1 \!\Join\!^{b_1} K'(T_1)$ has Hilbert function $$H_{T_2}=[ 1, 52, 103, 103, 100, 100, 106, 112, 116, 119, 120 \rightarrow ];$$ 
\item The semigroup $T_3:=T_2 \!\Join\!^{b_2} K'(T_2)$ has Hilbert function $$H_{T_3}=[ 1, 53, 155, 206, 203, 200, 206, 218, 228, 235, 239, 240 \rightarrow ];$$ 
\item The semigroup $T_4:=T_3 \!\Join\!^{b_3} K'(T_3)$ has Hilbert function $$H_{T_4}=[ 1, 54, 208, 361, 409, 403, 406, 424, 446, 463, 474, 479, 480 \rightarrow ];$$ 
\item The semigroup $T_5:=T_4 \!\Join\!^{b_4} K'(T_4)$ has Hilbert function $$H_{T_5}=[ 1, 55, 262, 569, 770, 812, 809, 830, 870, 909, 937, 953, 959, 960  \rightarrow ].$$ 
\end{itemize}
  Since a symmetric numerical semigroup is almost symmetric the Hilbert functions above can be computed from the one of $T_0$ by means of Proposition \ref{Formula}. 
\end{example}

The next two examples show that it is possible to find symmetric semigroups with decreasing Hilbert function even if we start with {\em non-almost symmetric semigroups}.\par 
Further we recall that, by \cite[Corollary 4.11]{OT}, in a symmetric semigroup with decreasing Hilbert function the difference between the multiplicity and the embedding dimension has to be greater or equal to $5$: in the following example is $6$.

\begin{example} \label{e-v}
Consider 
$
S:= \langle 30,33,37,64,68,69,71,72,73,75, \, \longrightarrow \, 89,91,92,95 \rangle
$ that has Hilbert function $[1, 27, 26, 25, 24, 27, 28, 29, 30, \rightarrow]$  and set $K:=K(S)+66 \subseteq S$. Then the semigroup $S\!\Join\!^{33}K$ is symmetric and has Hilbert function $[ 1, 54, 55, 55, 54, 57, 58, 59, 60 \rightarrow ]$.  We also note that $S$ is not almost symmetric by Proposition \ref{Formula}. 
\end{example} 

It is possible to define the numerical duplication $S \du E$, even if  the ideal $E$ is {\em not contained} in $S$; in this case we have to require that $E+E+b \subseteq S$, that is true if $E \subseteq S$, otherwise the set $S \du E$ is not a numerical semigroup. In \cite[Corollary 3.10]{S1} it is proved that, even if $E$ is not proper, $S \du E$ is symmetric if and only if $E$ is a canonical ideal; actually every symmetric numerical semigroup can be constructed as $S \du K(S)$ for some $S$ and some odd $b \in S$ (see also \cite[Proposition 3.3]{S1} and \cite[Section 3]{S2}). However, if {\em the ideal is not proper}, the Hilbert function of the numerical duplication can be different from the expected one; on the other hand the next examples show that also in this case it is possible to find symmetric semigroups with decreasing Hilbert function.

\begin{example} \label{l<4}
In \cite[Example 3.7]{OT} it is showed the following numerical semigroup 
$$S \! = \! \langle 30, 33, 37, 73, 76, 77, 79, 80, 81, 82, 83, 84, 85, 86, 87, 88, 89, 91,92, 94, 95, 98, 101, 108 \rangle$$
that has Hilbert function $[1, 24, 25, 24, 23, 25, 27, 29, 30 \rightarrow ]$. 

Let $K:=K(S)$. The following semigroups are all symmetric and all of them, but $H_2$, have decreasing Hilbert function. It is easy to see that, for the following choices of $b$, one has $K+K+b \subseteq S$, but $S \du K$ cannot be realized as a numerical duplication with respect to a proper ideal.

\begin{itemize}
\item The semigroup $H_1:=S \!\Join\!^{79} K$ has Hilbert function $[1, 44, 41, 40, 52, 58, 60,   \rightarrow ]$;
\item The semigroup $H_2:=S \!\Join\!^{81} K$ has Hilbert function $[1, 43, 45, 47, 52, 54, 56, 58, 60 \rightarrow ]$; 
\item The semigroup $H_3:=S \!\Join\!^{85} K$ has Hilbert function $[1, 44, 42, 45, 52, 54, 58, 60  \rightarrow ]$;
\item The semigroup $H_4:=S \!\Join\!^{87} K$ has Hilbert function $[1, 46, 48, 47, 49, 51, 56, 58, 60 \rightarrow ]$;
\item The semigroup $H_5:=S \!\Join\!^{93} K$ has Hilbert function $[1, 47, 49, 48, 48, 50, 55, 58, 60  \rightarrow ]$.
\end{itemize}

Note that $H_5$ have the same Hilbert function of the numerical duplication with respect to a proper canonical ideal of $S$.

\end{example}

If one consider the semigroups constructed in the previous section and their numerical duplications with respect to non proper canonical ideals, it is possible to find symmetric semigroups whose Hilbert functions {\em decrease at more levels}. For instance the next example shows a symmetric semigroup that decreases $13$ times. We also note that it decreases at level $14$, thus this suggests that the restrictions of Theorem \ref{corMax} can be removed.

\begin{example}\label{l15}
Let $S$ be the semigroup of Construction \ref{asd} with $\ell=15$, that has $258$ minimal generators. According to GAP \cite{GAP}, the symmetric semigroup $S\!\Join\!^{957} K(S)$ has Hilbert function 
$$
[ 1, 514, 514, 513, 512, 511, 510, 509, 508, 507, 506, 505, 504, 503, 502, 500, 523, H_S(17), \dots ].
$$
\end{example}

In the last example we show that the numerical duplication of $S$ with respect to a canonical ideal can have decreasing Hilbert function, even if that of $S$ is non-decreasing. \par Further, among the symmetric semigroups with decreasing Hilbert function this is the semigroup with {\it the smallest multiplicity and embedding dimension} that we know.

\begin{example} \label{mol}
Consider  
$S=\langle 19, 21,$ 24, 47, 49, 50, 51, 52, 53, 54, 55, 56, 58, $60 \rangle,$ 
shown in \cite[Example 3.2.1]{OT}, and let $T=S \!\Join\!^{49} K(S)$. The Gorenstein local ring $k[[T]]$ is
\begin{equation*}
\begin{split}
k[[&t^{38}, t^{42}, t^{48}, t^{49}, t^{94}, t^{100}, t^{101}, t^{102}, t^{104}, t^{105}, t^{106}, t^{107}, t^{108}, t^{109}, t^{110}, t^{111}, t^{112}, t^{113},  
t^{115}, t^{116}, \\
&t^{117}, t^{119}, t^{120}, t^{121}, t^{123}, t^{127}]].
\end{split}
\end{equation*}
Even if $k[[S]]$ has non-decreasing Hilbert function  $[1, 14, 14, 14, 16, 18, 19 \rightarrow ]$, the Hilbert function of $k[[T]]$ is $[ 1, 26, 25, 25, 32, 38 \rightarrow ]$; we also note that its multiplicity is $38$. 
\end{example} 

\section { \bf Appendix}
  In this appendix  we illustrate the technical lemmas necessary to prove Theorem \ref{l1}; in the sequel we shall assume  $e,\,n_1,\,n_2,\,\e, \, S$ \, be as defined in Construction \ref{asd}.
\begin{lemma} \label{copr} We have:
\enu
  \item If $\ell   $ is odd then  \ $GCD(e,n_1,n_2)=1\II     \ell  \notin\{35+46k, k\in \NN\}$.
    \item   If $\ell$ is even then \ $GCD(e,n_1,n_2)=1\II \ell  \notin\{ 14+22k,\,\, k\in \NN\}$. 
\denu
\end{lemma}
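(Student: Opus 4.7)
The plan is to reduce $\gcd(e, n_1, n_2)$ in each parity case, via elementary Euclidean manipulations, to the gcd of a linear polynomial in $\ell$ with a small explicit constant, and then combine the resulting congruence on $\ell$ with the parity hypothesis by CRT.

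\smallskip
\emph{Odd $\ell$.} First I would pass to the equivalent form
$$\gcd(e, n_1, n_2) = \gcd(e,\, n_1-e,\, n_2-e) = \gcd(e,\, 2\ell-1,\, \ell^2-6).$$
To kill the quadratic term I would form $2e - 2(\ell^2-6) = 6\ell + 20$ and then $(6\ell+20) - 3(2\ell-1) = 23$, which shows that the gcd divides $\gcd(2\ell-1,\,23) \in \{1,23\}$. For the reverse implication, if $\ell \equiv 12 \pmod{23}$ (the unique residue making $2\ell-1 \equiv 0 \pmod{23}$), a direct substitution gives $e \equiv 144 + 36 + 4 = 184 \equiv 0$ and $\ell^2 - 6 \equiv 138 \equiv 0 \pmod{23}$, so in that case the gcd equals $23$. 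Intersecting $\ell \equiv 12 \pmod{23}$ with the constraint that $\ell$ be odd then yields exactly $\ell \equiv 35 \pmod{46}$, which is the claimed exceptional set.

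\smallskip
\emph{Even $\ell$.} Here the key observation is that the difference $n_2 - e$ factors: $n_2 - e = \ell^2 - \ell - 6 = (\ell-3)(\ell+2)$, while $n_1 - e = \ell - 3$. Hence
$$\gcd(e, n_1, n_2) = \gcd\bigl(e,\, \ell - 3,\, (\ell-3)(\ell+2)\bigr) = \gcd(e,\, \ell - 3).$$
Reducing $e = \ell^2 + 3\ell + 4$ modulo $\ell - 3$ (set $\ell \equiv 3$) gives $e \equiv 22$, so the gcd equals $\gcd(22, \ell-3)$. Since $\ell$ is even, $\ell-3$ is odd, and this further collapses to $\gcd(11, \ell-3) \in \{1, 11\}$. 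It equals $11$ precisely when $\ell \equiv 3 \pmod{11}$, and combining with $\ell$ even produces $\ell \equiv 14 \pmod{22}$, matching the second exceptional set.

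\smallskip
I do not anticipate any serious obstacle; the content is purely the two Euclidean reductions. The only care required is to carry out the linear combinations so that they terminate at the specific constants $23$ and $22$, and then to correctly apply CRT to the parity constraint to recover the periods $46$ and $22$ appearing in the statement.
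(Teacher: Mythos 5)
Your proposal is correct and takes essentially the same route as the paper: both reduce $\gcd(e,n_1,n_2)$ to $\gcd(e,\,n_1-e,\,n_2-e)$ and show by elementary arithmetic that the only possible nontrivial common divisor is $23$ for odd $\ell$ (resp. $11$ for even $\ell$), then intersect the congruence $\ell\equiv 12\pmod{23}$ (resp. $\ell\equiv 3\pmod{11}$) with the parity of $\ell$ to obtain the exceptional sets $\{35+46k\}$ and $\{14+22k\}$. Your Euclidean eliminations ($2e-2(\ell^2-6)=6\ell+20$, then subtracting $3(2\ell-1)$ to land on $23$, and $e\equiv 22\pmod{\ell-3}$ in the even case) are merely a tidier way of reaching the same constants that the paper extracts by solving $a(ab^2+2b-4c)=23$ and $a(ab^2+9b-c)=-22$.
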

\begin{proof}{\it i}.   $\left\{ \begin{array}{lllllll}
 2\ell  -1=ab\\
 \ell  ^2 -6=ac  , \,\, a,b,c\in\NN
\end{array}\right. \II \left\{ \begin{array}{lllllll}
 2\ell   =ab+1\\
 4\ell  ^2 -24=4ac= a^2b^2+2ab -23  
\end{array}\right.$ \\ $\I  a(ab^2+2b-4c)=23, $  hence if $a>1 \I  \left\{ \begin{array}{lllllll}
 a=23\\
(ab^2+2b-4c)=1
\end{array}\right. \I$ \\ $ 23b^2+2b-4c=1$, and so $b=2q+1$, for some $q \in \NN$. Therefore the last equation becomes $92q^2+96q+24=4c$, that is $c=23q^2+24q+6$. \\
Hence  $\left\{ \begin{array}{lllllll}
\ell  =23 q+12 \\
 b=2q+1\\ c= 23q^2+24q+6 .
\end{array}\right.$\quad  $\ell   $ odd $\I q=2k+1 , \, k\in \NN\I \left\{ \begin{array}{lllllll}
\ell  =46k+35 \\
 a=23,\quad b=2q+1\\ c= 23q^2+24q+6 .
\end{array}\right.$
Further, since $e  = (\ell  +2) (2\ell  -1)-(\ell  ^2-6)$, we see that if $\e\in \{35+46k, k\in \NN\}$, then $GCD(e,n_1,n_2)\neq 1$. \\
\\[1mm]
{\it ii.} Since $(\ell  ^2-\ell  -6)=( \ell  -3)(\ell  +2)$, as above we get  
$$\left\{ \begin{array}{lllllll}
  \ell  -3=ab\\
 \ell  ^2+3\ell   +4=ac  , \,\, a,b\,\, odd, c\,\,even
\end{array}\right. \II \left\{ \begin{array}{lllllll}
  \ell   =ab+3\\
  a(ab^2+9 b - c)=-22   
\end{array}\right.$$
hence $a=11, \ell  =11b+3=14+22k,\,k\in \NN$.
\end{proof} 

\begin{lemma}\label{costr} Let $\Gamma':=\{kn_1,k\in[1,\e\,]\}\cup\{ n_2\}\cup\{ s_{p,q}\}\cup\{  r_{p,q}\}\cup\{t_1,\, t_2 \}$. Then:
 \enu
 \item  $ \e n_1-n_2 =(\ell  +1)n_2-(\ell  -1)e=s_{0,\e+1}$   
\item[] 
$\e n_1+e-s_{0,q}\in \{s_{0,q'},\, \, 2\leq q'\leq \ell \}$, for each  $2\leq q\leq \ell$.
 
 \,
 \item $e<n_1<n_2$ are the lowest elements in $\Gamma'$ and $\e n_1$ is the greatest element in $\Gamma'$. 
 \denu
\end{lemma}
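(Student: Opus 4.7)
The plan is to prove both parts by direct algebraic manipulation, relying on the basic identity $(\ell-1)e = -\ell n_1 + (\ell+2)n_2$ recorded in Construction \ref{asd} together with the explicit parity-split formulas for $e, n_1, n_2$. Neither part requires any conceptual input beyond routine symbolic and numerical checks.

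For part (i), I would first expand $s_{0,\ell+1} = (\ell+1)n_2 - (\ell-1)e$. Substituting the basic identity in the form $-(\ell-1)e = \ell n_1 - (\ell+2)n_2$ yields $s_{0,\ell+1} = \ell n_1 - n_2$, establishing both claimed equalities of the first line in one shot. For the second statement I would substitute $s_{0,q} = qn_2 - (q-2)e$ to get $\ell n_1 + e - s_{0,q} = \ell n_1 - qn_2 + (q-1)e$, and then replace $\ell n_1$ by $(\ell+2)n_2 - (\ell-1)e$ using the same identity. After collecting terms this collapses to $(\ell+2-q)n_2 - (\ell-q)e$, which is exactly $s_{0,\ell+2-q}$. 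The parameter range $q \in [2,\ell]$ forces $\ell+2-q \in [2,\ell]$, so the resulting element lies in the advertised set.

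For part (ii), the chain $e < n_1 < n_2$ is verified by evaluating $n_1 - e$ and $n_2 - n_1$ in the two parities: for odd $\ell \ge 5$ these are $2\ell-1$ and $\ell^2 - 2\ell - 5$, for even $\ell \ge 4$ they are $\ell-3$ and $\ell^2 - 2\ell - 3$, all strictly positive. To identify $\ell n_1$ as the maximum, I would treat each family in $\Gamma'$ separately. The multiples $kn_1$ with $k \le \ell$ are trivially bounded. The inequality $t_1 = (\ell+1)n_1 - (\ell-1)e \le \ell n_1$ reduces to $n_1 \le (\ell-1)e$, which follows from $n_1 < 2e$ together with $\ell \ge 4$; and $t_2 = \ell e - n_1 < \ell n_1$ follows from $e < n_1$. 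For the family $s_{p,q}$, since $s$ is strictly increasing in each of $p,q$ (as $n_1, n_2 > e$), its maximum over the integer region $2 \le p+q \le \ell+1$ lies on the line $p+q = \ell+1$; restricted to that line the value decreases in $p$ (because $n_1 < n_2$), so the maximum is $s_{0,\ell+1} = \ell n_1 - n_2 < \ell n_1$. Finally $r_{p,q} = \ell n_1 + e - s_{p,q} < \ell n_1$ because $s_{p,q} > e$, which follows from $s_{p,q} \ge s_{1,1} = n_1 + n_2 > e$. Comparisons with $n_2$ go the same way: a parity-split computation gives $n_2 < 2n_1$, and the estimates just derived yield $s_{p,q} > n_2$ and $r_{p,q} \ge e + n_2 > n_2$, while $t_1$ and $t_2$ are directly seen to exceed $n_2$.

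The only real obstacle is bookkeeping: several inequalities must be checked in both parities of $\ell$, and the monotonicity argument used to locate the maximum of $s_{p,q}$ over the integer lattice must be phrased carefully. However, every step reduces to a low-degree polynomial inequality in $\ell$ that holds for all $\ell \ge 4$, so no substantive difficulty arises.
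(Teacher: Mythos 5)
Your proof is correct and follows essentially the same route as the paper: part (i) is the same substitution of the relation $\ell n_1=(\ell+2)n_2-(\ell-1)e$, and part (ii) is the same kind of direct estimate, with your monotonicity-in-$(p,q)$ location of the extrema of $\{s_{p,q}\}$ (plus $r_{p,q}=\ell n_1+e-s_{p,q}$) replacing the paper's worst-case bound via the representation $n=2e+aF+bG$. The two bookkeeping schemes yield the same conclusions, so no gap remains.
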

\begin{proof}
{\it i.} The equality follows from $\ell n_1=(\ell+2)n_2-(\ell-1)e$, see Construction \ref{asd}. Moreover: \\   
$\ell   n_1+e-s_{0,q}=\ell   n_1+e-(qn_2-(q-2)e)= (\ell  +2-q)n_2-(\ell   -q )e=q'n_2-(q'-2)e,\, $ with $q'=\ell  +2-q\in [2, \e\,]$.\\[2mm]
 {\it ii.  } The first statement follows by a direct check.\, 
  To see that $\e n_1$ is the greatest element, first note that $\e n_1> k n_1$,  if $1\leq k<\e$, and $\e n_1>n_2$. 
  Moreover:  \\
$\e n_1-t_1= -n_1+(\e-1)e=-F+(\e-2)e> 2e-F>0$.\\
$\e n_1-t_2= (\e+1)n_1- \e e>0$.  \\
Now let $\e$ be odd and consider $d =\e n_1-n$, where $n=2e+aF+bG\in \{  s_{p,q}\}\cup\{r_{p',q'}\}$ with $-\e \leq a \leq \e$ and $1\leq b\leq \e+1$:\\ $d =(\e-a) F-bG +(\e-2)e = (\e-2)e+(\e-a-b)F-b(G-F)\geq (\e-2)e-F -(\e+1)(\e^2-2\e-5)= (\e-2)(\e^2+3\e+4)-(2\e-1)-(\e+1)(\e^2-2\e-5)=2\e^2+3\e-2  =n_2 >0$. \\
If $\e$ is even, then $F=\e-3,\, G=\e^2- \e-6\,$ and we get \\
$d=(\e-2)e+(\e-a-b)F-b(G-F)\geq (\e-2)(\e^2+3\e+4)-(\e-3)-(\e+1)(\e^2-2\e-3)=n_2 >0$.\\
\end{proof}

\begin{lemma} \label{costr0} Denote respectively by $F= (2\e-1)$,     $G=(\e^2-6)$, if $\e$ is odd, and by $F= ( \e-3)$, $G=(\e^2-\e-6)$, if $\e$ is even. Then $:$ 
\enu

 \item    $n \in  \{ s_{p,q} \}\cup\{  r_{p,q}\}\I n=  2e+ aF+bG$   with  $  a+b\in[1, \e+1 ],\, a\in[ -\e , \e\,]$, and $b\in[1,  \e+1]$.
\item If $n,n' \in  \{ s_{p,q} \}\cup\{  r_{p,q}\}$, then $n-n'=  aF+bG$\,\, with\,\, $ -\e\leq a+b,b\leq  \e$ and $-2\e \leq a\leq 2\e $.

\item If $ aF+bG=he$,   with   $a,a+b \in[- 2\e-3,2\e+3],\, b\in[-\e-1,2\e+2]$  then $a,b,h$ verify one of the following systems:  \denu
    $ (1) \! \left\{\begin{array}{lrrclllr} 
  a=\nu(\e+2) \\ 
       b=-\nu \\
a+b=\nu(\e+1)\\
 -1\leq \nu\leq 1 \\

 h=\nu, 
 \,{\it if}\, \e \, odd \\
  h=0, \, {\it if} \, \e \, even
  \end{array} 
 \right.    (2)  \!\left\{\begin{array}{lrrclllr}
 
   a = 2  \\ 
  b=     \ell  +1\\
 a+b= \e+3\\
 h=\e-2, 
 \,{\it if}\, \e \, odd \\
  h=\e-3, \, {\it if} \, \e \, even
\end{array} \right. ( 3)     \left\{\begin{array}{lrrclllr} 
 
 a = -  \e  \\ 
 b=   \ell  +2 \\
 a+b=2 \\
 h=\e-3
\end{array} \right. $   $( 4)  \!   \left\{\begin{array}{lrrclllr} 
  
   a = -2\e-2 \\ 
 b=  \ell  +3\\
 a+b=1-\e \\
  h=\e-4,  
\,{\it if}\, \e \, odd \\
  h=\e-3 , \, {\it if} \, \e \, even

\end{array} \right.  $ \\[2mm]
   where  systems $(2),(3),(4)$,   describe the possible situations with 
  $  h> 0  ;$ \, if   $h<0$, then  besides case $(1)$ one has    case  $  (2{\it bis})$ obtained from $(2)$  by changing $a,b,h$ in their  opposite numbers.
  
 \end{lemma}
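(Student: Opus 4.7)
The plan is to dispatch parts (i) and (ii) by direct substitution using the basic identity $-\ell F + (\ell+2) G = (\ell-3)\,e$ of Construction \ref{asd}, and then to attack part (iii) via a lattice-basis argument followed by a short, bounded enumeration.

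For (i), I would write $n_1 = e + F$ and $n_2 = e + G$, so that $s_{p,q} = pn_1 + qn_2 - (p+q-2)e = 2e + pF + qG$; with $(a,b) = (p,q)$ the constraints on $p,q$ immediately give $a \in [0,\ell]$, $b \in [1,\ell+1]$, $a+b \in [2,\ell+1]$. For $r_{p,q} = \ell n_1 + e - s_{p,q}$, I would first use the basic identity to rewrite $\ell n_1 + e = 2e - \ell F + (\ell+2)G$, obtaining $r_{p,q} = 2e + (-p)F + (\ell+2-q)G$, whence $a = -p \in [-\ell,-1]$, $b = \ell+2-q \in [2,\ell+1]$, $a+b \in [1,\ell]$. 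Taking the union of the two parameter ranges yields (i), and (ii) follows at once by subtracting two such representations and combining the box-bounds.

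For (iii), set $\Lambda := \{(a,b) \in \ZZ^2 \mid e \mid aF + bG\}$. By Lemma \ref{copr}, $\gcd(e,F,G) = \gcd(e,n_1,n_2) = 1$, so the form $(a,b) \mapsto aF + bG$ is surjective modulo $e$ and $[\ZZ^2 : \Lambda] = e$. Hence any two vectors of $\Lambda$ whose $2 \times 2$ determinant is $\pm e$ form a $\ZZ$-basis. I claim such a basis is
\[
v_1 := (\ell+2,\,-1), \qquad v_2 := (-\ell,\,\ell+2).
\]
A direct check shows $(\ell+2)F - G = e$ if $\ell$ is odd and $(\ell+2)F - G = 0$ if $\ell$ is even (in the latter case simply $G = (\ell+2)F$), while $-\ell F + (\ell+2)G = (\ell-3)\,e$ in both parities; so $v_1,v_2 \in \Lambda$, and since $\det(v_1,v_2) = (\ell+2)^2 - \ell = e$ they generate $\Lambda$. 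Writing $(a,b) = m_1 v_1 + m_2 v_2$ then gives
\[
a = m_1(\ell+2) - m_2\ell, \quad b = -m_1 + m_2(\ell+2), \quad a+b = m_1(\ell+1) + 2m_2,
\]
and $h = m_1 h_1 + m_2(\ell-3)$ with $h_1 = 1$ if $\ell$ is odd and $h_1 = 0$ if $\ell$ is even.

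It remains to enumerate the pairs $(m_1,m_2)$ compatible with the prescribed box on $a$, $b$, $a+b$. Combining the $b$-bound with the $a$-bound shows $|m_2| \leq 1$ for $\ell \geq 4$: any $|m_2| \geq 2$ either forces $|m_1|$ large enough to violate the $a$-bound or violates $|m_1(\ell+1) + 2m_2| \leq 2\ell+3$. For each admissible $m_2 \in \{-1,0,1\}$ the $(a+b)$-bound then restricts $m_1$ to at most three values, and a uniform case-check produces exactly the seven pairs
\[
(m_1,m_2) \in \{(-1,-1),\,(-1,0),\,(0,0),\,(1,0),\,(-1,1),\,(0,1),\,(1,1)\},
\]
corresponding respectively to system $(2\mathrm{bis})$, to $\nu = -1,0,1$ in system $(1)$, and to systems $(4),(3),(2)$. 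Only at the last step do I split on the parity of $\ell$, to substitute the correct value of $h_1$ and read off the listed formulas for $h$. The main obstacle will be the careful exclusion of $|m_2| \geq 2$: the $b$-interval $[-\ell-1,\,2\ell+2]$ is asymmetric, so ruling out $m_2 = 2$ and $m_2 = -2$ requires separately using the lower $b$-bound together with the $a$- and $(a+b)$-bounds rather than a single symmetric estimate.
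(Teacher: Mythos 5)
Your argument is correct and is, at its core, the same as the paper's: your basis vectors $v_1=(\ell+2,-1)$, $v_2=(-\ell,\ell+2)$ give exactly the parametrization $a=-\mu\ell+\nu(\ell+2)$, $b=\mu(\ell+2)-\nu$, $a+b=2\mu+\nu(\ell+1)$ (with $(m_1,m_2)=(\nu,\mu)$) that the paper derives from $a+b(\ell+2)=\mu e$ together with $-\ell+(\ell+2)^2=e$, and your final enumeration (only $|m_2|\leq 1$ survives, seven pairs, the candidates $(0,-1),(1,-1),(-2,-1)$ being excluded by the asymmetric $b$-bound and the $a+b$-bound, which is why only $(2\textit{bis})$ and not $(3\textit{bis})$, $(4\textit{bis})$ appears) coincides with the paper's case check for $\mu=0$, $\mu=1$ and its $h<0$ remark. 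The one real difference is how the parametrization is justified: the paper splits by parity, eliminating in the odd case and factoring $F=\ell-3$ with $\gcd(\ell-3,e)=1$ in the even case, whereas your lattice-index argument ($[\ZZ^2:\Lambda]=e$ from $\gcd(e,F,G)=\gcd(e,n_1,n_2)=1$ via Lemma \ref{copr}, plus $\det(v_1,v_2)=e$) handles both parities uniformly; this is a cleaner packaging of the same coprimality input, not a different idea. One slip to repair in part (i): $\ell n_1+e=(\ell+1)e+\ell F=4e+(\ell+2)G$, not $2e-\ell F+(\ell+2)G$ (the latter equals $(\ell-1)e$); the correct route is $r_{p,q}=\ell n_1+e-s_{p,q}=(\ell-1)e+(\ell-p)F-qG$ and then the substitution $(\ell-3)e=-\ell F+(\ell+2)G$, which does yield your stated (and correct) formula $r_{p,q}=2e-pF+(\ell+2-q)G$ and the ranges you list.
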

 
\begin{proof} {\it i}.\, Note that \, $e= -\ell   +(\ell  +2)^2$. Therefore if $\ell$ is  odd: 
$$
\left[\begin{array}{crrllllr}
e  &=& (\ell  +2) (2\ell  -1)&\!-\! & (\ell  ^2-6) =(\ell  +2) F\!-\! G \\
(\ell  -3)\,e  &=&-\ell  \,\,\,(2\ell  -1)&\!+\! &(\ell  ^2-6)(\ell  +2)   
\end{array}\right.
$$ 
 $$ \left[\begin{array}{lllllllr} 
 s_{p,q} &=pn_1\!+\! qn_2\!-\! (p\!+\! q\!-\! 2)e&= 2e&\!+\!&   (2\ell  \!-\! 1)\,p   +  (\ell  ^2\!-\! 6)\,\,\,q  \\

 r_{p,q} & = \ell   n_1+e-s_{p,q}  &=  2e&\!-\!&    (2\ell  \!-\! 1)\,p +  (\ell  ^2\!-\! 6)(\ell  \!+\! 2\!-\! q) \\
& &= 2e&\!+\!&  (2\ell  \!-\! 1)\,p' + (\ell  ^2\!-\! 6)\,q'   \quad    
\end{array} \right.$$

 while if $\ell$ is even we have
$ \begin{array}{cllllllr}
  (\ell  -3)e&=&-\ell   \,( \ell  -3)+(\ell  ^2-\ell  -6)(\ell  +2)  \\
\end{array} $ 
and
$$ \left[\begin{array}{lllllllr}
 s_{p,q} &=pn_1\!+\! qn_2- (p\!+\! q\!-\! 2)e&=&2e&+&    ( \ell  \!-\! 3)\,p  +   (\ell  ^2\!-\! \e\!-\! 6)\,\,\,q  \\
r_{p,q} & = \ell   n_1+e-s_{p,q}  &=& 2e&-&   ( \ell  \!-\! 3)\, p +(\ell  ^2\!-\! \e\!-\! 6)(\ell  \!+\! 2\!-\! q) \\
&&=&2e&\!+\!&  ( \ell  \!-\! 3) \,p' + (\ell  ^2\!-\! \e\!-\! 6)\,q'  \end{array} \right.$$
where\quad $  \left[\begin{array}{lrlllllr}
 2\leq p +q \leq \e+1 ,& 0\leq p\leq  \e,\quad 1\leq q \leq \e+1\\
  1\leq p'+q'\leq \e ,& \!- \e\! \leq p'\leq 0, \quad1\leq q'\leq \e\!+\!1
  \end{array} \right.$\\[2mm]
 {\it ii.  }  It is immediate by part {\it i}.
 \\[2mm]
 {\it iii.} Let $\e$ be {\it odd}  and  $a(2\e-1)+b(\e^2-6)=he=h[(\ell  +2) (2\ell  -1) \!-\!   (\ell  ^2-6)]$.
    Then\\[2mm]  $ [a-h  (\e+2) +\mu(\e^2-6)](2\e-1)+[b+h-\mu(2\e-1)](\e^2-6)=0, \, \,\forall\,\mu\in\ZZ $, therefore\\[2mm]
       $ \left\{\begin{array}{lrrclllr} 
a-h (\ell+2)=-\mu (\ell  ^2-6)\\
 b+h  = \mu (2\ell  -1), \\
  \end{array} \right. \I 
\left\{\begin{array}{lrrclllr} 
 h = \mu (2\ell  -1)-b \\ 
  a-[  \mu (2\ell  -1)-b](\ell+2)=-\mu (\ell  ^2-6) 
\end{array} \right. \I$
$$
a+b(\e +2)= \mu e. 
$$
If $\e$ is {\it even}, let $a(\e-3)+b(\e^2-\e-6)=a(\e-3)+b (\e-3) (\e+2)= (\e-3)[a +b  (\e+2)]=he$.   Since $\e-3$ divides the first member and $(\e-3,e)=1$ by the previous lemma, it follows that $h=(\e-3)\mu$ for some $\mu$ and then $a +b  (\e+2)=\mu e$. Hence in both cases we obtain the equality  $a +b  (\e+2)=\mu e$.\\[2mm]
Now consider $a +b  (\e+2)=\mu e$;  since $-\e\cdot 1+ (\e+2)(\e+2)=e$,   we get:  
{$\left\{\begin{array}{lrrclllr} 
 a = -\mu \e+\nu (\e+2) \\ 
 b= \mu ( \ell  +2)-\nu\\
 a+b=2\mu +\nu (\e+1)
\end{array} \right. $}\\[2mm] 
By the assumptions  $   a,  a+b\in[-2 \e-3,2\e+3] $, we can assume  $h,\mu\geq 0$\,  
(the cases with $h<0$ can be obtained by changing the values of $a,b,a+b$ found for $h>0,$ with $ b\leq\e+1$  in their opposite). Hence:
\enu
\item[$\mu=0 ,$]   $a,a+b \in[- 2\e-3,2\e+3]\I  (1)\left\{\begin{array}{lrrclllr} 
  
a=\nu(\e+2),\\ 
  b=-\nu,\\
a+b=\nu(\e+1)\\
 h=\nu, 
 \,{\it if}\, \e \, odd,\\
 h=0, \, {\it if} \, \e \, even \end{array} \right. $ with $\,  \nu\in[-1,\,1]$\\
  \item[$\mu>0  $]   $\I-1\leq \nu\leq 1$; in fact  $ a+b\in[- 2\e-3,2\e+3]\I  \nu\leq 1$, and \\ $ -\e+\nu(\e+2)\geq a= -\e+\nu(\e+2)-(\mu-1)\e \geq -2 \e-3 \I \nu\geq -1$ $(\nu=-1\I \mu= 1)$.\\[2mm]
 Moreover if $\nu=0,1 $, then $b= \mu ( \ell  +2)-\nu=\mu(\e+1)+\mu-\nu\leq 2\e+2$ implies $\mu\leq 1$.   Hence   $\nu \in \{-1,0,1\}$, $\mu=1$ and we get the    systems $(2),(3),(4)$ of the thesis.  
 \denu\end{proof}

\begin{lemma}\label{costr1} As above, let $\Gamma':=\{kn_1 \ | \ k\in[1,\e\,]\}\cup\{ n_2\}\cup\{  s_{p,q},  r_{p',q'}\}\cup\{t_1,\, t_2 \}$. Then:
\enu
\item  The elements of $\Gamma' $ are all non-zero $ (mod\, e)$;
\item The elements of $\Gamma'$ have distinct residues $(mod\, e)$;
\item If $m,n,n'\in \Gamma'$, the equality $m+n=n'+\alpha e$ implies either $\alpha>0$ or  $\alpha=0$ and  $n'\in\Gamma'':=\{ n_1+n_2, 2n_2, kn_1 \ | \ 2 \leq k\leq \e\}$;
\denu 
  \end{lemma}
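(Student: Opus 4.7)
The unifying tool for all three statements is to write each $x\in\Gamma'$ uniquely in the form $x=c_xe+a_xF+b_xG$. Using the expressions from the proof of Lemma~\ref{costr0}(i) together with the direct identities $kn_1=ke+kF$, $n_2=e+G$, $t_1=2e+(\ell+1)F$, and $t_2=(\ell-1)e-F$, one tabulates these triples: the pairs $(a_x,b_x)$ are $(k,0)$ for $kn_1$ with $k\in[1,\ell]$, $(0,1)$ for $n_2$, $(p,q)$ for $s_{p,q}$, $(-p,\ell+2-q)$ for $r_{p,q}$, $(\ell+1,0)$ for $t_1$, and $(-1,0)$ for $t_2$. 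A direct check on the defining ranges shows that these pairs are pairwise distinct, so the assignment $x\mapsto(a_x,b_x)$ is injective on $\Gamma'$; moreover $a_x\in[-\ell,\ell+1]$, $b_x\in[0,\ell+1]$, and $a_x+b_x\in[-1,\ell+1]$.

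For (i), the condition $x\equiv 0\pmod{e}$ rewrites as $a_xF+b_xG=he$ for some $h\in\ZZ$; the ranges just listed lie within the hypotheses of Lemma~\ref{costr0}(iii), so we run through its systems $(1)$--$(4)$ and $(2{\it bis})$. The only solution whose $(a,b)$ is compatible with those of some element of $\Gamma'$ is $(0,0)$ (from system $(1)$ with $\nu=0$), and no $x\in\Gamma'$ has $a_x=b_x=0$. For (ii), applying the same reasoning to the differences $(a_m-a_n,\,b_m-b_n)$, whose ranges again satisfy the hypotheses of Lemma~\ref{costr0}(iii), the only admissible solution remains $(0,0)$; injectivity of $x\mapsto(a_x,b_x)$ then forces $m=n$.

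For (iii), set $A:=a_m+a_n-a_{n'}$, $B:=b_m+b_n-b_{n'}$, and $H:=\alpha+c_{n'}-c_m-c_n$; the hypothesis becomes $AF+BG=He$. One has $B\in[-\ell-1,2\ell+2]$ and $A+B\in[-\ell-3,2\ell+3]$; although $A$ alone can reach $\pm(3\ell+2)$, one may shift $(A,B,H)$ by an integer multiple of the kernel vector $(\ell+2,-1,1)$ if $\ell$ is odd (respectively $(\ell+2,-1,0)$ if $\ell$ is even) to bring $A$ into the window $[-2\ell-3,2\ell+3]$ required by Lemma~\ref{costr0}(iii). For each solution system that emerges, we back-solve for the compatible triples $(m,n,n')$ using injectivity of $(a,b)$, and then compute $\alpha=H+c_m+c_n-c_{n'}$. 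The main obstacle is the final enumeration, which must show that $\alpha\leq 0$ forces $\alpha=0$ and places $n'$ in $\Gamma''=\{n_1+n_2,\,2n_2\}\cup\{kn_1\mid 2\leq k\leq\ell\}$. The intuitive content is that $\Gamma''$ consists exactly of those $n'\in\Gamma'$ admitting a pure decomposition $n'=m+n$ with $m,n\in\Gamma'$ and no $e$ absorbed; the lower bounds $e<n_1<n_2$ from Lemma~\ref{costr}(ii) supply the quantitative control needed to rule out $\alpha<0$ in every other configuration.
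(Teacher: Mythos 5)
Your coordinate set-up is sound and is in fact the same device the paper uses: writing every element of $\Gamma'$ as $ce+aF+bG$ with the table of pairs $(a_x,b_x)$ you list (which is correct), and converting each question into an equation $aF+bG=he$ to be fed into Lemma \ref{costr0}(iii). On this basis parts (i) and (ii) are essentially complete: the ranges you state do fall inside the hypotheses of Lemma \ref{costr0}(iii), the map $x\mapsto(a_x,b_x)$ is indeed injective on $\Gamma'$, and a short check (which you assert but do not write out) confirms that none of the nonzero solution systems $(1)$--$(4)$, $(2\,\mathit{bis})$ can be realized by a single pair or by a difference of two pairs.

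Part (iii), however, is where the real content of the lemma lies, and there you have only a plan, not a proof. Saying ``for each solution system that emerges, we back-solve for the compatible triples and compute $\alpha$'' and that ``the final enumeration must show that $\alpha\leq 0$ forces $\alpha=0$ and $n'\in\Gamma''$'' is a statement of the goal, not an argument: the paper needs roughly two pages of case analysis (sums of the three types of elements against each subtracted element, matched system by system against Lemma \ref{costr0}(iii), with the exceptional $\alpha=0$ configurations identified explicitly as $2n_1$, $n_1+n_2=s_{1,1}$, $2n_2=s_{0,2}$, $kn_1$) to carry this out, and nothing in your closing ``intuitive content'' sentence replaces it --- in particular the inequalities $e<n_1<n_2$ alone do not rule out $\alpha<0$; one genuinely has to confront each admissible $(A,B,H)$ and the resulting value $\alpha=H+c_m+c_n-c_{n'}$. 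In addition, your kernel-shift device for bringing $A$ into the window $[-2\ell-3,2\ell+3]$ is left unjustified: shifting by $(\ell+2,-1,1)$ (resp.\ $(\ell+2,-1,0)$) moves $B$ by $\mp1$ and $A+B$ by $\mp(\ell+1)$, so you must verify that the shifted triple still satisfies the hypotheses of Lemma \ref{costr0}(iii) (this does work, because the extreme values of $A$ force $B$ and $A+B$ away from their own extremes, but that needs to be checked), and for $\ell$ odd the shift also changes $H$, which must be tracked when computing $\alpha$. As it stands, statement (iii) --- the one actually used in Proposition \ref{apery} --- is not established by your proposal.
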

  
\begin{proof}
  {\it  i\,.} 
  In fact   for any element $s\in \Gamma',$ we have $ s=aF+bG+ke$. Then if $s=\lambda e$ for some $\lambda\geq 0$, it follows that $aF+bG=he$ with $h=\lambda -k $. Note that\\[2mm]
  \centerline{ $t_1=(\e+1)F+2e$, \quad $t_2=-F+(\e-1)e$.} Then:\\
$\left[\begin{array}{llllr} 

 s\in\{ s_{p,q}\}\cup\{  r_{p',q'}\}&  aF+bG=(\lambda-2)e&   a+b\in[1, \e+1]&b\in[1,\e+1], \,a\in[-\e,\e\,]\\
 s=t_1&  (\e+1)F=(\lambda-2)e& a+b=a=\e+1&b=0\\
 s=t_2&  F=( \e-\lambda-1)e& a+b=a=1&b=0\\
 s=k n_1&  kF=(\lambda-k)e& a+b=a=k\leq \e&b=0\\
 \end{array}\right.$\\[2mm]
Every case verifies the assumptions of Lemma \ref{costr0}.{\it iii}. Hence we can apply this result: note that in cases   $(1)\dots(4)$ of Lemma \ref{costr0}.{\it iii}, either $b=-\nu\in[-1,1]$  or $b\geq \e+1$, and $b=\e+1\II a+b=\e+3$, $b=-\nu\II a+b=\nu(\e+1)$. Easily we can see that no case is possible.\\[2mm]
  {\it ii}.    
  Let $m,n\in   \Gamma'$ and let $m=n+he, h>0$. Then $m=pF+qG+ke, n=p'F+q'G+k'e\I he=m-n=(p-p')F+(q-q')G+(k-k')e\I aF+bG=(h-k+k')e$:  by recalling the above table and Lemma \ref{costr0}.{\it i,ii.} we list the possible $c_i=m-n$: 
   \\[2mm]
 $ \left[\begin{array}{lll}
 &a+b&b\\
c_1:\,m,n\in\{ s_{p,q}, r_{p',q'}\}&[-\e ,\e\,] ,&  [-\e ,\e ]\\
 c_2:\, m\in\{  s_{p,q},  r_{p',q'}\},n=kn_1 \ \ \ &[ 1-k,\e-k+1] \ \ \ & [1,\e+1]\\
  c_3:\, m\in\{  s_{p,q},   r_{p',q'}\},n=n_2&[0 ,\e ]& [0,\e ]\\
  c_4:\, m\in\{  s_{p,q},  r_{p',q'}\},n=t_1&[-\e ,0]& 1,\e+1]\\
  c_5:\, m\in\{  s_{p,q},   r_{p',q'}\},n=t_2&[2,\e+2  ]& [1,\e+1]\\
  c_6: \, m=kn_1,n=n_2&    k-1  & -1\\
  c_{7}:\, m=kn_1,n=t_1&   k-\e-1  & 0\\
  c_{8}:\, m=kn_1,n=t_2&   k+1  & 0\\
 c_{9}:\, m=t_1,n=n_2&   \e  & -1\\
 c_{10}:\, m=t_1,n=t_2&   \e+2  & 0\\
 c_{11}:\, m=t_2,n=n_2&   -2  & -1\\
 \end{array}\right.$ \\[2mm]
As in {\it i}, by Lemma \ref{costr0}.{\it iii}
one can easily see that  no case is possible.\\[2mm]
  {\it iii}.    We denote by:\\
       $p_1 =\left\{\begin{array}{lllll} 
  k n_1&= kF+ke& a+b=a=k\leq \e&b=0,& or \\
  t_1&=  (\e+1)F+2e & a+b=a=\e+1&b=0&or\\
  t_2&= -F+( \e-1)e& a+b=a=-1&b=0\\
 \end{array}\right.$\\
We can write $p_1=aF+0G+\delta e\,\,\delta\in[1,\e\,],\,\, a+b=a \in[-1,\e+1]\setminus\{0\},\,\,b=0$.\\
We divide the elements of $\Gamma'$ in three types:\\[2mm]
$\left[\begin{array}{lllll}

p_1&=aF+ \delta e\quad\delta\in[1,\e\,],& a+b  \in[-1,\e+1] &b=0\\ 
n_2&=  G+ e& a+b=1 &b=1,\,a=0\\
p_3&= aF+bG+2e\in\{  s_{p,q},  r_{p',q'}\}&   a+b\in[1, \e+1]&b\in[1,\e+1], \,a\in[-\e,\e\,]\\

\end{array}\right.$
\\[2mm]
  Denote by $\sigma_1$ any sum   $ p_1+p_1'$:\\[2mm]
  $\sigma_1=\left[\begin{array}{llllr} 
  k n_1+hn_1& = (k+h)F+(k+h)e& a+b\in[2,2\e\,]&b=0\\
  k n_1+t_1&  = (\e+1+k)F+(2+k)e & a+b=a=(\e+1+k) &b=0\\
 k n_1+t_2& =(k-1)F+(k+ \e-1)e& a+b=a=k-1&b=0\\
 2t_1&=(2\e+2)F+4e&a+b=2\e+2&b=0\\
 t_1+t_2&=\e F+(\e+1)e& a+b=\e&b=0\\
 2t_2&=-2F+(2\e-2)e&a+b=-2&b=0\\

 \end{array}\right.$\\[2mm]
 Denote by $\sigma_2$ the sum   $ p_1+n_2$:\\
$\sigma_2=\left[\begin{array}{llllr} 
 k n_1+n_2 & = kF+1G+(k+1) e& a+b=k+1 &a=k&b=1\\
 t_1+n_2& =(\e+1)F+1G+3e & a+b=\e+2&a=\e+1 &b=1\\
  t_2+n_2& =-F+1G+ \e \, e& a+b=0 &a=-1&b=1\\
\end{array}\right.$\\[2mm]
  Let $ p_3,p_3'\in\{s_{p,q},r_{p,q}\}$, where $p_3=a'F+b'G+2e,\,\,p'_3 =a''F+b''G+2e$, and denote by $\sigma_3=aF+bG+\beta e$ \,\, any sum $p 
 _3+p_1, p_3+n_2, p_3+p'_3:$\\
  $\sigma_3=
 \left[\begin{array}{llllr} 
  p_3+ k n_1  &  =(k+a')F+b'G+(k+2)e&   a+b\in[ k+1,  \e+k+1] &b\in[1,\e+1] \\
p_3+t_1 &=(\e+1+a') F+b'G+4e& a+b \in[\e+2,2\e+2]&b\in[1,\e+1] \\
p_3+t_2&=(a'-1)F+b'G+( \e+1)e& a+b \in[0,\e ]&b \in[1,\e+1] \\
p_3+n_2    & = a'F+(b'+1)G +3e&a+b\in[2,\e+2]&b\in[2,\e+2]\\
p_3+p'_3&= (a'+a'')F+(b'+b'')G+4e& a+b\in[2,2\e+2]&b\in[2,2\e+2] \\
 \end{array}\right.$\\[2mm]
  In conclusion we can write:\\[2mm]
 $\left[\begin{array}{llllll} 
\sigma_1&=aF+\lambda e,&  \lambda\in[2,2\e  ] &  a=a+b\in[-2,2\e+2 ]  &b=0\\
\sigma_2&=aF+G+\mu e,&\mu\in[2,\e+1]&a+1=a+b \in[0,\e+2]&b=1\\
 2n_2&=2G+2 e,&\mu=2&a =0&b=2\\
\sigma_3 &=aF+bG+\nu e,&\nu\in[3,\e+2]& a\in[-2\e,2\e]\,a+b\in[0,2\e+2]&  b\in[1,2\e+2]  \\
\end{array}\right.$ \\[2mm]
 $\left[\begin{array}{lllll}

p_1&=aF+ \delta e\quad\delta\in[1,\e\,],& a+b  \in[-1,\e+1] =a &b=0\\ 
n_2&=   G+ e& a+b=1 &a=0\,\,b=1 \,\\
p_3&= aF+bG+2e\in\{ s_{p,q}, r_{p,q}\}&   a+b\in[1, \e+1]& a\in[-\e,\e\,]\,\,b\in[1,\e+1]  \\

\end{array}\right.$\\[2mm]
   Further note that, $\sigma_i=n_2+\alpha e\I \alpha >0$ by Lemma \ref{costr}.{\it ii}, since $2n_1 > n_2$. \\[2mm] 
 Let $m+n=\sigma_i, \,\, n'=p_j, 1\leq i,j\leq 3$, \, assume \, $\sigma_i=p_j+\alpha e$ \, 
 and consider the following table:\\[2mm]
 $\left[\begin{array}{lllll}
 \sigma_1   - p_1  =aF+ue&  u\in[2\!- \e,2\e\!-\!1] & a+b \in[ - \e\!-\!3 ,2\e+3] &b=0\\
  
 \sigma_1   - p_3    =a F+b  G+ue,& u\in[0,2\e\!-\!2] & a+b \in[ - \e\!-\!3,2\e\!+\!1 ] &b\in[ - \e\!-\!1,\!-\!1] \\
 \sigma_2   - p_1  = aF +1  G+ue,&u\in[2\!-\!\e,\e\,]&a+b \in[ - \e\!-\!1,\e+3 ]&b=1 \\
 
 \sigma_2  - p_3  = aF+b  G+ue,&u\in[0, \e\!-\!1 ]&a+b \in[ - \e\!-\!1, \e+1 ]&b\in[-\e ,0] \\
2n_2   - p_1  = aF+2  G+ue,&u\in[2\!- \e,1 ]&a+b \in[ - \e\!+\!1,  3 ]&b=2 \\
2n_2   - p_3  = aF+b  G+0e,&u=0&a+b \in[ - \e\!+\!1,1 ]&b\in[- \e\!+\!1,1 ] \\
 \sigma_3  - p_1    = aF+b  G+ue,&u\in[3\!- \e,\e+ 1 ]&a+b \in[ - \e\!-\!1,2\e+3 ]& b\in[1,2\e+2 ] \\

 \sigma_3  - p_3    = aF +b G+ue,&u\in[1, \e  ]&a+b \in[ - \e\!-\!1,2\e\!+1\! ]&b\in[  - \e,2\e\!+\!1 ] \\
\end{array}\right.$
\\[2mm]
   \centerline{$\sigma_i-p_j=aF+bG+ue=\alpha e\II aF+bG=he,\,\, \,h=\alpha-u \,\,( \alpha=h+u)  \qquad(*)$}
   \\[2mm]
     To prove that, in all possible cases, either $\alpha >0$ or $\alpha=0$ and  $p_j\in\Gamma''$, we can apply Lemma \ref{costr0}.{\it iii},  since the integers $a,b,a+b$ verify the required assumptions.\, 
  It is straightforward to see that the cases $2n_2-p_1$ and $2n_2-p_3$ are impossible, except when $2n_2=p_3$.\\
  $\bullet$  Case $2n_2=p_3$: this equality means    $  2G+2e=a'F+b'G+2e\I a'F+(b'-2)G=0,\,$ with $a'\in[-\e,\e], b'\in[1,\e+1]$. Hence we are in case (1) of   Lemma \ref{costr0}.{\it iii}, with $h=0$ and so either   $b'-2=0, a=0, p_3=2n_2$, or $\e$ even, $b'-2=\pm 1, a= \pm(\e+2)$, that are impossible.   \\
 $\bullet$  Case $\sigma_1-p_1$: {  we have $b=0$ and then $a+b=a=h=0$, by Lemma \ref{costr0}.{\it iii}. If $p_1 \notin \Gamma''$, it is easy to see that $p_1=t_1$ and thus $\sigma_1=(\e+1)F+(\e+1)e$, because $a=0$. In this case $\alpha=u=\e+1-2>0$. \\
$\bullet$  Case $\sigma_1-p_3$: we have $u\geq 0$ and $b \in [-\e-1,-1]$ thus $h \geq 0$ and $\alpha$ is always positive, except when either $ h=u=0,$ {   $ b=-1,a=\e+2, \,\e$ even, or $a=-2,b=-\e-1,  h\in[2-\e,3-\e]$. In the first case,    it is easy to see that $u=h=0$ implies $\sigma_1=2n_1$. From these cases follows that $a+b \geq \e+1$, but it is impossible when $\sigma_1=2n_1$.} In the second one,  $p_3=(\e+1)G+2e\I \sigma_1= 2t_2=-2F+(2\e-2)e\I u= 2\e-4, \alpha\geq\e-2>0$.  \\
$\bullet$  Case $\sigma_2-p_1$: we have $b=1, a+b=-\e-1$, and $h \in\{-1,0\}$ by Lemma \ref{costr0}.{\it iii}. It follows that $\sigma_2=-F+G+(\e+1) e,\ p_1=(\e+1)F+2e$; then $u=\e+1-2\geq 2$ and $\alpha>0$.\\
$\bullet$  Case   $\sigma_2-p_3 $:  In this case $u \geq 0$ and, according to Lemma \ref{costr0}.{\it iii}, we have $b \in [-1,0]$ that implies $h \geq 0$. Thus we always have $\alpha \geq 0$, except when $u=h=0$; it is easy to see that $u=0$ implies $\sigma_2=n_1+n_2$. If $b=0$, then $a+b=0$ implies $p_3=n_1+n_2 \in \Gamma''$.  \\
$\bullet$  Case  {$\sigma_3-p_1=aF+bG+ue=\alpha e, \quad \sigma_3=a'F+b'G+\nu' e$, \quad$p_1=a''F+\delta e$:} \\
 - if $h\geq \e-2\I \alpha >0$, since $u\in[3-\e,\e+ 1 ]$. \\
 - if $h=\e-3=-u$, we have $ \alpha=0$ and $\nu'-\delta=3-\e$, i.e.
$  \delta= \e+\nu'-3$, that implies $\nu'=3$ and $\delta= \e$, because $\nu' \leq 3$ and $\e \geq \delta$; thus $ p_1=\e n_1 \in \Gamma''$.  \\
 - if $h<\e-3$ and $\e$ is odd, the possible cases are $(1) , (4)$ of Lemma \ref{costr0}.{\it iii}:\par
   In case $(1)$ of Lemma \ref{costr0}.{\it iii}, since $b\geq 1$,  we get $  b=1$, thus $h=-b=\nu$,   $a+b=\nu( 1+\e) $: \\ 
-   $ h=-1,\,\, b=1, a+b= -1-\e;\,$  since $a'+b'\geq 0$, we deduce $a''=\e+1$, hence $p_1=t_1=$\par $(\e+1)F+2e$,   $\sigma_3=p_3'+t_2$ has $\nu'=\e+1$, so $u=\e-1>2$ and then $\alpha=h+\e-1>0$.   \par
In case $(4)$ of Lemma \ref{costr0}.{\it iii}:\\
  - $h=\e-4$   then $\alpha>0$ except $u\in[4-\e,3-\e]$.\par
 If $u=4-\e$, then $\alpha=0$ and $ \delta= \e+\nu'-4$, thus $\nu'\in[3,4] $ since $\delta\leq \e$. Hence\par   
 $\nu'=4\I\delta=\e \I p_1=\e n_1 \in \Gamma''$.\par
 $\nu'=3\I\delta=\e-1\I p_1=(\e-1)n_1 \in \Gamma''$, because if $p_1=t_2$ then $a+b \in [1,2\e-1]$\par that is a contradiction by   (4) .\par 
  If $u=3-\e$, then $ \delta= \e+\nu'-3$ implies $\nu'=3$ as above, then the possible $\sigma_3$ are $p_3+n_1,$ \par$ p_3+n_2$ and so $b=b'\leq \e+2$, that is incompatible with the value   $b=\e+3$ in $(4)$.\\
- $h<\e-3$   and  $\e$ is even, then $h=0$, $b=-\nu$, $a+b=\nu( 1+\e) $. Since in $\sigma_3-p_1$ we have \par $a+b\geq -\e-1$ then $\nu\geq -1$. Now  $b\geq 1$, implies $\nu=-1, b=1$, and $a+b=-\e-1$.\par Therefore we can proceed as in the case $h=-1,b=1$ treated above when $\e$ is odd.  \\  
   $\bullet$    Case $\sigma_3-p_3$: one always has $\alpha > 0$, except if   $ b=1=u, a+b=-\e-1$ and $\e$ is odd; in this case
  $\sigma_3=a'F+b'G+\nu' e$, with $a'+b'\geq 0$, hence $a+b=-\e-1$ implies that $p_3=a''F+b''G+2e, $ with $a''+b''=\e+1$ and $a'+b'=0$.
  Then $\sigma_3=p'_3+t_2=a'F+b'G+(\e+1)e$ and $u=\e-1>2$,   that is  a contradiction because $u=1$.  }
This proves {\it iii}.
\end{proof}

\begin{proposition} \label{apery} The Ap\'ery set of $S$ is\\[2mm]  
\centerline {$ \{0\}\,\cup \,\Gamma'$ \, with $\Gamma'= \{kn_1 \ | \ k\in[1,\e\,]\}\cup\{ n_2\}\cup\{ s_{p,q}\}\cup\{  r_{p,q}\}\cup\{t_1,\, t_2 \} $ \ } \\[2mm] 
 \centerline{$ Ap_2 =\{2n_1,n_1+n_2,2n_2\}$  \qquad $Ap_k =\{kn_1\},$ \, for \, $3\leq k\leq \e $.\hspace{1.5cm} } \\[2mm] 
Further $\e n_1-e$ is the Frobenius number of $S$ and, with the notation of Theorem \ref{nari},\\[1mm] 
 $A=\{0,n_2, s_ {0,\ell+1} \}\cup \{kn_1 \ | \ k\in[1,\e\,] \, \},$ \quad
  $B=\{s_{p,q} \ | \ (p,q)\neq  (0,\e+1 )\}\cup \{r_{p,q}\}\cup\{t_1,t_2\}.$ 
\end{proposition}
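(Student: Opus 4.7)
The plan is to identify $\Gamma' \cup \{0\}$ with the Ap\'ery set $Ap(S)$ by combining a counting argument with a minimality check, and then to read off the remaining assertions. Every element of $\Gamma'$ lies in $S$ by construction: the $s_{p,q}$ outside $\{n_1+n_2, 2n_2\}$, the $r_{p,q}$, together with $t_1, t_2$, are minimal generators of $S$, while $kn_1$, $n_1+n_2$ and $2n_2$ are immediate sums of $n_1$ and $n_2$. The cardinality computation
\begin{equation*}
|\Gamma' \cup \{0\}| = 1+\ell+1+\tfrac{\ell^2+3\ell}{2}+\tfrac{\ell^2+\ell}{2}+2 = \ell^2+3\ell+4 = e,
\end{equation*}
combined with Lemma~\ref{costr1}.{\it i}--{\it ii}, which give that the elements of $\Gamma'$ are nonzero and pairwise distinct modulo $e$, shows that $\Gamma' \cup \{0\}$ meets each residue class modulo $e$ in exactly one element of $S$.

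The main obstacle is to upgrade this to the equality $Ap(S) = \Gamma' \cup \{0\}$, which requires that each $\gamma \in \Gamma'$ be the minimum element of $S$ in its class, i.e.\ $\gamma - e \notin S$. For $\gamma \in \Gamma \setminus \{e\}$ (equivalently, $\gamma \in \Gamma' \setminus \Gamma''$) the conclusion is immediate: if $\gamma - e$ were a positive element of $S$, the equality $\gamma = e + (\gamma - e)$ would contradict the minimality of $\gamma$ as a generator of $S$. The delicate case is $\gamma \in \Gamma'' = \{kn_1 : 2 \leq k \leq \ell\} \cup \{n_1+n_2, 2n_2\}$, where $\gamma$ is not a minimal generator. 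I would argue by contradiction: assuming $\gamma - e = \sum_{g \in \Gamma} c_g g$ with $c_g \geq 0$, reduce modulo $e$ using the residues $n_1 \equiv F$, $n_2 \equiv G$, $t_1 \equiv (\ell+1)F$, $t_2 \equiv -F$, $s_{p,q} \equiv pF+qG$, $r_{p,q} \equiv -pF+(\ell+2-q)G$ (with $F,G$ as in Lemma~\ref{costr0}), and obtain a congruence $aF+bG \equiv he \pmod{e}$ whose admissible solutions $(a,b,h)$ are enumerated in Lemma~\ref{costr0}.{\it iii}. A direct bookkeeping on the $e$-parts of the generators (using $n_1 = e+F$, $n_2 = e+G$, $t_1 = 2e+(\ell+1)F$, $t_2 = (\ell-1)e-F$) then shows that in each admissible case the value of the representation exceeds $\gamma$, contradicting $\gamma - e < \gamma$; Lemma~\ref{costr1}.{\it iii} encapsulates the key structural constraint on how elements of $\Gamma'$ combine.

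Once the identification $Ap(S) = \Gamma' \cup \{0\}$ is in hand, the rest is routine. The Frobenius number is $\max Ap(S) - e = \ell n_1 - e$ by Lemma~\ref{costr}.{\it ii}. For the order stratification, the elements of $\Gamma \setminus \{e\}$ are minimal generators of $S$ and hence belong to $Ap_1$, while the remaining Ap\'ery elements are exactly those of $\Gamma''$, with $ord(kn_1)=k$ and $ord(n_1+n_2) = ord(2n_2) = 2$; this yields $Ap_2 = \{2n_1, n_1+n_2, 2n_2\}$ and $Ap_k = \{kn_1\}$ for $3 \leq k \leq \ell$. Finally, the decomposition $Ap(S) = A \cup B$ required by Theorem~\ref{nari} is verified by the pairings $kn_1 + (\ell-k)n_1 = \ell n_1$ and, via Lemma~\ref{costr}.{\it i}, $n_2 + s_{0,\ell+1} = \ell n_1$ within $A$, together with $s_{p,q} + r_{p,q} = \ell n_1 + e$ (the defining relation of $r_{p,q}$) and $t_1 + t_2 = \ell n_1 + e$ (built into the construction of $t_2$) within $B$; these match Nari's conditions with $\alpha_m = \ell n_1$.
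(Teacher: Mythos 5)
Your overall skeleton matches the paper's: the cardinality count together with Lemma \ref{costr1}.{\it i}--{\it ii} shows that $\Gamma'\cup\{0\}$ is a complete system of residues modulo $e$ inside $S$, and the orders, the Frobenius number and the pairings needed for Theorem \ref{nari} are read off at the end exactly as in the paper. The trouble is the middle step, where two genuine gaps appear. First, you dispose of the elements of $\Gamma\setminus\{e\}$ by declaring them minimal generators of $S$ ``by construction''. The construction only lists a generating set; minimality of those generators is not known at this stage --- in the paper it is the content of Theorem \ref{l1}.{\it ii} and is deduced \emph{from} Proposition \ref{apery} (via $\nu=|Ap_1|+1$). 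Deciding that, say, $t_2$ or an $r_{p,q}$ is not a sum of two elements of $M$ requires exactly the combinatorial analysis of Lemma \ref{costr1}.{\it iii}, so invoking minimality here is circular.

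Second, for $\gamma\in\Gamma''$ your plan is to take an arbitrary representation $\gamma-e=\sum_{g\in\Gamma}c_g g$, reduce modulo $e$, and feed the resulting relation $aF+bG=he$ into Lemma \ref{costr0}.{\it iii}. But that lemma only enumerates solutions within the explicit bounds $a,\,a+b\in[-2\ell-3,\,2\ell+3]$ and $b\in[-\ell-1,\,2\ell+2]$, and an arbitrary representation (many copies of $n_1$, say) produces coefficients far outside these ranges, so the lemma does not apply in the one-shot way you use it; nor is the asserted contradiction (``the value of the representation exceeds $\gamma$'') established anywhere. The paper never reduces the whole representation at once: it proves the uniform statement that for $s,s_1,\dots,s_r\in\Gamma'$ with $r\geq 2$, an equality $s=s_1+\cdots+s_r+\beta e$ with $\beta\geq 0$ forces $\beta=0$ and $s\in\Gamma''$, by collapsing two summands at a time via Lemma \ref{costr1}.{\it iii} (each pairwise sum is an element of $\Gamma'$ plus a nonnegative multiple of $e$) and iterating on $r$; every application then stays inside the ranges covered by Lemma \ref{costr0}, and the statement settles both of your cases at once, since $\gamma-e\in S$ would yield such an equality with $\beta\geq 1$. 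This iteration is the missing mechanism in your write-up; note also that your assertion $ord(2n_2)=2$ needs the fact, extracted from the proof of Lemma \ref{costr1}.{\it iii}, that $2n_2$ can only arise as $n_2+n_2$.
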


\begin{proof}  We have that $|\{0\}\,\cup \,\Gamma'|=e$, as follows by Construction \ref{asd};  further these elements are all distinct $mod \,e$   by the previous lemma.  
Now we want to  prove that for $s,s_1, \dots, s_r\in \Gamma'$, the equality $s=s_1+...+s_r+\beta  e,\,r\geq 2$, with $\beta  \geq 0$, is impossible or implies $\beta =0 $ and $s\in\Gamma''= \{ n_1+n_2,2n_2\}\cup \{kn_1 \ | \ k\geq 2\}$.\\
By the first assertion, there exists $s'\in \Gamma'$ such that $s_1+s_2=s'+\beta' e$, where $ \beta'\geq 0$ by Lemma \ref{costr1}. Therefore if $r\geq 3$, we have $s=s'+s_3+...+s_r+(\beta +\beta') e$; by iterating and by Lemma \ref{costr1}.{\it iii}, we deduce that the unique possible case is $s\in \Gamma '' $ with $\beta=0$.  
  Since the minimal generators of $S$ are in $\Gamma'$, this proves that for every element $s\in \Gamma'$ we have $s-e\notin S$ and, by Lemma \ref{costr1}.{\it ii} this means that $Ap={0} \cup \Gamma'$; moreover the elements in $\Gamma'$ of order greater than $1$ are in $\Gamma''$.\par  
Now we show that  $Ap_2=\{2n_1,n_1+n_2,2n_2\}$. 
In fact, recalling that, by Lemma \ref{costr}.{\it ii}, $n_1<n_2$ are the smallest elements in $\Gamma'$, it follows that $ord(2n_1)=ord(n_1+n_2)=2$. On the other hand  $ord (2n_2)=2$, because   in the proof of Lemma \ref{costr1}.{\it iii}  we proved that $\sigma_i=2n_2\II \sigma_i=n_2+n_2$. \par
  Moreover $ord(kn_1)=k$ because $n_1$ is the smallest element of $S$ and then $Ap_k=\{ k n_1\}$.\par  
 Finally, it follows from Lemma \ref{costr}.{\it ii} that $(\e n_1-e)$ is the Frobenius number of $S$ and, by construction and by Lemma \ref{costr}.{\it i}, we have:\\ 
$A=\{0,n_2, s_{0,\ell+1}\}\cup \{kn_1 \ | \ k\in[1,\e\,] \, \}, \quad$ 
$B=\{s_{p,q} \ | \ q\neq  \e+1 \}\cup \{r_{p,q}\}\cup\{t_1,t_2\}.$
\end{proof}

  \noindent \textbf{Acknowledgments.} The authors would like to thank A. Sammartano for pointing their attention to \cite{P}.


\begin{thebibliography}{99}

 
\bibitem{AKN} F. Arslan, A. Katsabekis, M. Nalbandiyan, {\em On the Cohen-Macaulayness of tangent cones of monomial curves in $\mathds{A}^4(K)$}, arXiv:1512.04204.
 
\bibitem{AM} F. Arslan, P. Mete, {\em Hilbert functions of Gorenstein monomial curves}, Proc. Am. Math. Soc. {\bf 135} (2007), no. 7, 1993--2002.
 
\bibitem{AMS} F. Arslan, P. Mete, M. \c{S}ahin, {\em Gluing and Hilbert functions of monomial curves}, Proc. Am. Math. Soc. {\bf 137} (2009), no. 7, 2225--2232.

\bibitem{ASS} F. Arslan, N. Sipahi, N. \c{S}ahin, {\em Monomial curve families supporting Rossi's conjecture}, J. Symb. Comput. {\bf 55} (2013), 10--18. 

\bibitem{AW} D.D. Anderson, M. Winders, {\em Idealization of a module}, J. Commut. Algebra {\bf 1} (2009), no.1, 3--56. 
 

\bibitem{BDS} V. Barucci, M. D'Anna, F. Strazzanti, {\em A family of quotients of the Rees algebra}, Commun. Algebra {\bf 43} (2015), no. 1, 130--142.

\bibitem{BDS2} V. Barucci, M. D'Anna, F. Strazzanti, {\em Families of Gorenstein and almost Gorenstein rings}, arXiv:1403.4200v1.

\bibitem{BF} V. Barucci, R. Fr\"oberg, {\em One-dimensional almost Gorenstein rings},  J. Algebra {\bf 188} (1997) 418--442.

\bibitem{BH} W. Bruns, J. Herzog, Cohen-Macaulay rings, Revised Ed., Cambridge University Press, 1998.

\bibitem{CJZ} T. Cortadellas Benitez, R. Jafari, S. Zarzuela Armengou, {\em On the Ap\'ery sets of monomial curves}, Semigroup Forum {\bf 86} (2013), no. 2, 289--320.

 
\bibitem{D} M. D'Anna, {\em A construction of Gorenstein rings}, J. Algebra {\bf 306} (2006), no. 2, 507--519.
 
\bibitem{DDM} M. D'Anna, M. Di Marca, V. Micale, {\em On the Hilbert function of the tangent cone of a monomial curve}, Semigroup Forum {\bf 91} (2015), no. 3, 718--730.

 
\bibitem{DF} M. D'Anna, M. Fontana, {\em An amalgamated duplication of a ring along an ideal: basic properties}, J. Algebra Appl. {\bf 6} (2007), no. 3, 443--459.
 
\bibitem{DS} M. D'Anna, F. Strazzanti, {\em The numerical duplication of a numerical semigroup}, Semigroup Forum {\bf 87} (2013), no. 1, 149--160.

\bibitem{DGM} M. Delgado, P. A. Garc\'ia-S\'anchez, J. Morais, {\em ``NumericalSgps'' -- a GAP package}, Version 0.980. (http://www.gap-system.org/Packages/numericalsgps.html).

\bibitem{el} J. Elias, {\em The Conjecture of Sally on the Hilbert function for curve singularities}, J. Algebra {\bf 160} (1993), no. 1, 42--49.

\bibitem{GAP} The GAP Group, {\em GAP -- Groups, Algorithms, and Programming}, Version 4.4.9 (2006). (http://www.gap-system.org).

\bibitem{GMP} S. Goto, N. Matsuoka, T. T. Phuong, {\em Almost Gorenstein rings}, J. Algebra {\bf 379} (2013), 355--381.
 
\bibitem{JZ} R. Jafari, S. Zarzuela Armengou, {\em On monomial curves obtained by gluing}, Semigroup Forum {\bf 88} (2014), 397--416. 
 
\bibitem{Nari} H. Nari, {\em Symmetries on almost symmetric numerical semigroups}, Semigroup Forum {\bf 86} (2013), no. 1, 140--154.

\bibitem{OT} A. Oneto, G. Tamone, {\em On semigroup rings with decreasing Hilbert function}, arXiv:1602.00327v1.

\bibitem{PT} D. P. Patil, G. Tamone, {\em CM defect and Hilbert functions of monomial curves}, J. Pure Appl. Algebra {\bf 215} (2011), 1539--1551.

\bibitem{P} T. J. Puthenpurakal, {\em On the monotonicity of Hilbert functions}, arXiv:1501.07395v1.

\bibitem{r} M.E. Rossi, {\em Hilbert functions of Cohen-Macaulay rings}, Commutative Algebra and its Connections to Geometry  -  Contemporary Math. {\bf 555} (2011), AMS, 173--200.   

\bibitem{rv} M.E. Rossi, G. Valla, {\em  Cohen-Macaulay local rings of embedding dimension $e+d-3$}, Proc. London Math. Soc. {\bf 80} (2000), no. 1, 107--126.   

\bibitem{js} J.D. Sally, {\em On the associated graded ring of a local Cohen-Macaulay ring}, J. Math. Kyoto Univ. {\bf 17 } (1977), no. 1, 19--21.

\bibitem{js1} J.D. Sally, {\em Tangent cones at Gorenstein singularities}, Compositio Math. {\bf 40} (1980), no. 2, 167--175.

\bibitem{js2} J.D. Sally, {\em Cohen-Macaulay local rings of embedding dimension $e+d-2$}, J. Algebra {\bf 83} (1983), no. 2, 393--408.


 \bibitem{S1} F. Strazzanti, {\em One half of almost symmetric numerical semigroups}, Semigroup Forum {\bf 91} (2015), no. 2, 463--475.\

\bibitem{S2} F. Strazzanti, {\em Minimal genus of a multiple and Frobenius number of a quotient of a numerical semigroup}, Int. J. Algebra Comput. {\bf 25} (2015), no. 6, 1043--1053.
\end{thebibliography}
\end{document}